\newtheorem{assumption}{Assumption}[section]
\newtheorem{proposition}{Proposition}[section]
\newtheorem{lemma}{Lemma}[section]
\newtheorem{thm}{Theorem}[section]
\newtheorem{remark}{Remark}[section]
\title{Optimal distributions for randomized unbiased estimators with an infinite horizon and an adaptive algorithm}
\author{Chao Zheng}
\author{Jiangtao Pan}
\author{Qun Wang}
\affil[]{School of Data Sciences, Zhejiang University of Finance and Economics, Hangzhou, China \thanks{Email: chao.zheng12@gmail.com; 211510011019@zufe.edu.cn; wangqun@zufe.edu.cn.}}
\date{}
\begin{document}
\maketitle

\begin{abstract}
The randomized unbiased estimators of Rhee and Glynn (Operations Research:
63(5), 1026-1043, 2015) can be highly efficient at approximating expectations of path functionals associated with stochastic differential equations (SDEs). However, there is a lack of algorithms for calculating the optimal distributions with an infinite horizon. In this article, based on the method of Cui et.al. (Operations Research Letters: 477-484, 2021), we prove that, under mild assumptions, there is a simple representation of the optimal distributions. Then, we develop an adaptive algorithm to compute the optimal distributions with an infinite horizon, which requires only a small amount of computational time in prior estimation. Finally, we provide numerical results to illustrate the efficiency of our adaptive algorithm.

\textbf{Keywords:} randomized unbiased estimators, optimal distribution, stochastic differential equations, adaptive algorithm

\textbf{AMS subject classifications (2000):} 60H35, 65C30, 90C34, 91G60
\end{abstract}

\section{Introduction}
Monte Carlo methods are useful for approximating expectations of functionals of stochastic processes, if there are no analytical solutions. For a standard Monte Carlo method, when the functional of the underlying stochastic processes is sampled exactly, the convergence rate of the mean squared error (MSE) is $O(c^{-1})$, where $c$ is the computational cost. In this article, we consider underlying stochastic processes following certain stochastic differential equations (SDEs), which are difficult to generate exactly. In this case, we may resort to a time-discrete scheme (e.g., the Euler scheme, the Milstein scheme) to obtain approximate values. The classical time-discrete schemes can be found in Kloeden and Platen \cite{KP}. However, those time-discrete schemes, associated with a standard Monte Carlo method, usually lead to biased Monte Carlo estimators. For such a biased estimator, the convergence rate is lower than $O(c^{-1})$, although by carefully selecting the step size of the time-discrete scheme and the number of samples as functions of the computational cost $c$, the bias and the variance of the Monte Carlo estimator can be balanced (see Duffie and Glynn \cite{DG}). 

Rhee and Glynn \cite{RG} made a breakthrough by constructing several unbiased Monte Carlo estimators when the underlying stochastic processes are approximated using time-discrete schemes. A similar idea was considered in McLeish \cite{M}). These estimators recover the convergence rate $O(c^{-1})$ and can be regarded as unbiased versions of the multilevel Monte Carlo estimators of Giles (see Giles \cite{Gi}). They can easily combine with any time-discrete scheme that is convergent in $L^2$ norm with a sufficiently high order and hence have many applications. Since then, the idea of unbiased estimators has been extended to more complicated settings. Glynn and Rhee \cite{GR} considered the application of unbiased estimators for Markov chain equilibrium expectations. Blanchet and Glynn \cite{BG} investigated the exact simulation for functions of expectations. Vihola \cite{V} presented a more general class of unbiased estimators. Zheng and Glynn \cite{ZG} developed a central limit theorem for infinitely stratified unbiased estimators. 

The construction of unbiased estimators in Rhee and Glynn \cite{RG} involves determining the optimal distribution of a random variable $N$. This is an infinite-horizon optimization problem subject to certain constraints. Rhee and Glynn \cite{RG} proposed an $m$-truncated dynamic programming algorithm to find the optimal distribution of $N$ in $O(m^3)$ operations. Cui et.al. \cite{CLZZ} improved this result by providing an algorithm with order $O(m)$. However, the above algorithms are valid for merely the $m$-truncated optimal distribution instead of the optimal distribution with an infinite horizon. In practical applications, one may heuristically choose a value of $m$, run an algorithm for the $m$-truncated optimal distribution and finally heuristically select the tail distribution (or simply ignore it). Clearly, this approach may not be optimal in the infinite sense. On the other hand, both the algorithm in Rhee and Glynn \cite{RG} and that in Cui et.al. \cite{CLZZ} rely on a prior estimation of $m$ sample variances. Although choosing a large value of $m$ would typically make the $m$-truncated optimal distribution close to the optimal distribution with an infinite horizon, it can be computationally expensive in prior estimation when $m$ is large. 

In this article, we provide a solution to address these two problems. Based on the fact that many time-discrete schemes converge in $L^2$ norm with a sufficiently high order, we propose mild assumptions on the convergent behaviour of these schemes and prove that there is a simple representation of the optimal distribution, and, in particular the optimal tail distribution. The proof is based on the optimization method in Cui et.al. \cite{CLZZ} and a careful analysis of some structures of the optimal distribution. For practical applications, we develop an adaptive algorithm, which is an extension of the algorithm in Cui et.al. \cite{CLZZ} obtained by adding an adaptive value of $m$ and the optimal tail distribution. In our numerical experiment, we find that a small value of $m$ typically suffices to produce a highly accurate approximation of the optimal distribution with an infinite horizon, saving a large amount of computation time for the prior estimation. 

The remainder of the article is organized as follows: In Section 2, we review the unbiased estimators in Rhee and Glynn \cite{RG}. Section 3 reviews the method of Cui et.al. \cite{CLZZ}. In Section 4, we derive the optimal distribution with an infinite horizon and propose an adaptive algorithm. Section 5 reports numerical results to illustrate the efficiency of our algorithm. Finally, we conclude the article in Section 6.

\section{Randomized unbiased estimators}

Let $(X(t): t\geq 0)$ be the unique solution to the following SDE
\[
dX(t)=\mu(X(t))dt+\sigma(X(t))dB(t),
\]
where $\mu: \mathbb{R}^d\rightarrow \mathbb{R}^d$, $\sigma: \mathbb{R}^d\rightarrow \mathbb{R}^{d\times m}$ and $(B(t): t\geq 0)$ is an $m$-dimensional standard Brownian motion. In many applications, one needs to calculate the expectation $E[f(X)]$, where $f$ is a functional of $X$. 

In general, it is difficult to generate $X$ and $f(X)$ exactly. Hence, one may use a time-discrete scheme for approximation. The simplest approximation is the Euler scheme
\[
X_h((j+1)h)=X_h(jh)+\mu(X_h(jh))+\sigma(X_h(jh))(B((j+1)h)-B(jh)), 
\]
where $h$ is the step size and $X_{h}(0)=X(0)$. Here, $X_h$ and $f(X_h)$ are approximations of $X$ and $f(X)$ respectively. However, a typical time-discrete scheme is biased, i.e., $E(f(X_{h}))\neq E(f(X))$, although $E(f(X_{h}))$ may converge to $E(f(X))$ as $h$ goes to $0$. In this case, the convergence rate of the MSE of a standard Monte Carlo estimator associated with a time-discrete scheme is lower than $O(c^{-1})$ (see Duffie and Glynn \cite{DG}), where $c$ is the computational cost. This section reviews the unbiased estimators introduced by Rhee and Glynn \cite{RG}, which can achieve the canonical convergence rate $O(c^{-1})$ in the above setting. 

Let $L^2$ be the Hilbert space of square integrable random variables, and let $Y\in L^2$ (i.e., $E(Y^2)<\infty$). It may be difficult to generate $Y$ in finite time, but we assume that there is a sequence of approximations $(Y_n:n\geq0)$, that can be generated in finite time and satisfy $\lim\limits_{x\to+\infty}E[(Y_n-Y)^{2}]=0$. Let $N$ be a nonnegative integer-valued random variable, that is independent of $Y_n$, $n\geq0$, and let
\[
Z=\sum_{n=0}^{N}\frac{\Delta_n}{P(N\geq n)}
\]
where $\Delta_n=Y_n-Y_{n-1}$ with $Y_{-1}=0$ and $P$ is the probability of $N$. We call $Z$ the coupled sum estimator. The condition for $Z$ to be an unbiased estimator is provided by Theorem \ref{thm2.1} from Theorem 1 in Rhee and Glynn \cite{RG}:

\begin{thm} \label{thm2.1}
If
\begin{equation}
\sum_{n=1}^{\infty}\frac{E[(Y_n-Y)^{2}]}{P(N\geq n)}<\infty
\label{9}
\end{equation}
then $Z\in L^2$ is an unbiased estimator of $E(Y)$, and
\[
E(Z^2)=\sum_{n=0}^{\infty}\frac{v_n}{P(N\geq n)}
\]
where $v_n=E[(Y_{n-1}-Y)^2]-E[(Y_n-Y)^2]$.
\end{thm}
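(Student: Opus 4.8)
The plan is to reduce everything to a finite‑sum truncation, compute the first two moments there \emph{exactly} by a telescoping identity, and then pass to the limit using the summability hypothesis \eqref{9}. For $k\ge 0$ put
\[
Z_k=\sum_{n=0}^{N\wedge k}\frac{\Delta_n}{P(N\ge n)}=\sum_{n=0}^{k}\frac{\Delta_n}{P(N\ge n)}\mathbf 1\{N\ge n\}.
\]
Since each $\Delta_n=Y_n-Y_{n-1}$ lies in $L^2$, $\mathbf 1\{N\ge n\}$ is bounded, and $N$ is independent of $(Y_n)$, every $Z_k$ is a finite sum of $L^2$ random variables, hence $Z_k\in L^2$; moreover $Z_k\to Z$ almost surely because $N<\infty$ a.s. So it suffices to control the $Z_k$ in $L^2$, uniformly in $k$, and to identify the limit.

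First I would record exact moment formulas for $Z_k$. Using independence of $N$ and $(Y_n)$, together with $E[\mathbf 1\{N\ge m\}\mathbf 1\{N\ge n\}]=P(N\ge m\vee n)$ and $P(N\ge m\vee n)/\big(P(N\ge m)P(N\ge n)\big)=1/P(N\ge m\wedge n)$, one gets $E[Z_k]=\sum_{n=0}^{k}E[\Delta_n]=E[Y_k]$ and
\[
E[Z_k^2]=\sum_{0\le m,n\le k}\frac{E[\Delta_m\Delta_n]}{P(N\ge m\wedge n)}=\sum_{m=0}^{k}\frac{1}{P(N\ge m)}\,E\!\left[\Delta_m\Big(\Delta_m+2\sum_{n=m+1}^{k}\Delta_n\Big)\right].
\]
Since $\sum_{n=m+1}^{k}\Delta_n=Y_k-Y_m$ and $\Delta_m=Y_m-Y_{m-1}$, the identity $a(a+2b)=(a+b)^2-b^2$ with $a=Y_m-Y_{m-1}$ and $b=Y_k-Y_m$ turns the bracket into $(Y_k-Y_{m-1})^2-(Y_k-Y_m)^2$, so
\[
E[Z_k^2]=\sum_{m=0}^{k}\frac{E[(Y_k-Y_{m-1})^2]-E[(Y_k-Y_m)^2]}{P(N\ge m)}.
\]

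Next I would let $k\to\infty$. Because $Y_k\to Y$ in $L^2$, we have $E[Z_k]=E[Y_k]\to E[Y]$, and for each fixed $m$ the $m$‑th summand above tends to $\big(E[(Y_{m-1}-Y)^2]-E[(Y_m-Y)^2]\big)/P(N\ge m)=v_m/P(N\ge m)$. Applying the same computation to $Z_k-Z_l$ (restricting the indices to $l<m,n\le k$) gives, for $l<k$,
\[
E[(Z_k-Z_l)^2]=\sum_{m=l+1}^{k}\frac{E[(Y_k-Y_{m-1})^2]-E[(Y_k-Y_m)^2]}{P(N\ge m)},
\]
so once the summands are dominated, uniformly in $k\ge m$, by a summable sequence, this tail vanishes as $l\to\infty$; then $(Z_k)$ is Cauchy in $L^2$, its $L^2$‑limit coincides with its almost sure limit $Z$, and therefore $Z\in L^2$, $E[Z]=\lim_k E[Z_k]=E[Y]$, and $E[Z^2]=\lim_k E[Z_k^2]=\sum_{m=0}^{\infty}v_m/P(N\ge m)$, which is the assertion.

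The crux is the uniform domination just invoked, and I expect it to be the main obstacle. I would write $E[(Y_k-Y_{m-1})^2]-E[(Y_k-Y_m)^2]=E[\Delta_m\,(2(Y_k-Y)-(Y_{m-1}-Y)-(Y_m-Y))]$ and apply Cauchy--Schwarz, bounding its modulus by $\|\Delta_m\|_2\big(2\|Y_k-Y\|_2+\|Y_{m-1}-Y\|_2+\|Y_m-Y\|_2\big)$ with $\|\Delta_m\|_2\le\|Y_m-Y\|_2+\|Y_{m-1}-Y\|_2$; since $\|Y_k-Y\|_2$ is bounded in $k$, hypothesis \eqref{9} then yields a summable bound on the $m$‑th summand that is independent of $k\ge m$, and the same estimate shows the series $\sum_m v_m/P(N\ge m)$ converges, so the right‑hand side in the statement is well defined. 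Carrying out this estimate carefully --- and treating the $m=0$ term separately, where $Y_{-1}=0$ --- is the only genuinely technical point; the rest is the exact algebra above together with the $L^2$‑convergence $Y_k\to Y$.
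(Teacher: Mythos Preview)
The paper does not give its own proof of this statement; it simply quotes Theorem~1 of Rhee and Glynn \cite{RG}. So there is no argument in the paper to compare yours against, and the question reduces to whether your proposal stands on its own.

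Your truncation $Z_k$, the exact telescoping computations of $E[Z_k]$, $E[Z_k^2]$ and $E[(Z_k-Z_l)^2]$, and the strategy of passing to the limit via an $L^2$--Cauchy criterion are all correct and cleanly executed. The genuine gap is precisely the step you yourself flag as the crux: the ``uniform domination''. After inserting $\|\Delta_m\|_2\le\|Y_m-Y\|_2+\|Y_{m-1}-Y\|_2$ and $\|Y_k-Y\|_2\le C:=\sup_j\|Y_j-Y\|_2$, your Cauchy--Schwarz bound on the $m$-th summand, divided by $P(N\ge m)$, contains terms of order
\[
\frac{C\,\|Y_m-Y\|_2}{P(N\ge m)},\qquad \frac{C\,\|Y_{m-1}-Y\|_2}{P(N\ge m)},\qquad \frac{\|Y_{m-1}-Y\|_2^{\,2}}{P(N\ge m)}.
\]
Hypothesis \eqref{9} controls only $\sum_m \|Y_m-Y\|_2^{2}/P(N\ge m)$, and this does \emph{not} force summability of the quantities above. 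For a concrete failure, take $\|Y_m-Y\|_2=1/m$ and $P(N\ge m)=m^{-1/2}$: then \eqref{9} is the convergent series $\sum m^{-3/2}$, but your majorant contributes $\sum m^{-1/2}=\infty$. The index-shifted quadratic term is likewise uncontrolled without an extra assumption bounding $P(N\ge m)/P(N\ge m{+}1)$. Hence the assertion ``hypothesis \eqref{9} then yields a summable bound'' is not justified, and the dominated-convergence style argument does not close as written. A repair must avoid crude term-by-term absolute bounds and instead exploit the sign/telescoping structure --- e.g.\ summation by parts on your exact identity for $E[(Z_k-Z_l)^2]$ rewrites it as a boundary term $E[(Y_k-Y_l)^2]/P(N\ge l{+}1)$ plus a sum of nonnegative terms $E[(Y_k-Y_m)^2]\big(1/P(N\ge m{+}1)-1/P(N\ge m)\big)$, which is the form in which the limit can actually be controlled.
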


Rhee and Glynn \cite{RG} introduced another unbiased estimator $\widetilde{Z}$ of $E(Y)$ defined as
\[
\widetilde{Z}=\sum_{n=0}^{N}\frac{\widetilde{\Delta}_n}{P(N\geq n)},
\]
where $\widetilde{\Delta}_n=\widetilde{Y}_n-\widetilde{Y}_{n-1}$. Here, $(\widetilde{Y}_n,\widetilde{Y}_{n-1})$ has the same marginal distribution as $(Y_n,Y_{n-1})$, but $\widetilde{\Delta}_n$ for each $n$ is independent. This estimator is called the independent sum estimator. The following theorem, from Theorem 2 in Rhee and Glynn \cite{RG}, guarantees that $\widetilde{Z}$ is unbiased.

\begin{thm}
If (\ref{9}) holds, then $\widetilde{Z}\in L^2$ is an unbiased estimator of $E(Y)$. Furthermore, 
\[
E(\widetilde{Z}^2)=\sum_{n=0}^{\infty}\frac{\widetilde{v}_n}{P(N\geq n)}
\]
where $\widetilde{v}_n=var(Y_{n}-Y_{n-1})+(E(Y)-E(Y_{n-1}))^2-(E(Y)-E(Y_{n}))^2$.
\end{thm}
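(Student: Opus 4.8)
Throughout write $q_n:=P(N\ge n)$, so that $q_0=1$, the sequence $(q_n)$ is non-increasing, and $\widetilde{Z}=\sum_{n\ge0}\mathbf{1}\{N\ge n\}\,\widetilde{\Delta}_n/q_n$; also put $a_n:=E[\widetilde{\Delta}_n]=E(Y_n)-E(Y_{n-1})$, using $Y_{-1}=0$ and that $(\widetilde{Y}_n,\widetilde{Y}_{n-1})$ shares the marginals of $(Y_n,Y_{n-1})$ (so that $\operatorname{var}(\widetilde{\Delta}_n)=\operatorname{var}(Y_n-Y_{n-1})$ as well). The plan parallels the treatment of the coupled estimator $Z$: expand $\widetilde{Z}^2$ directly, take expectations, and pin down the one place where the \emph{mutual} independence of the increments $\widetilde{\Delta}_0,\widetilde{\Delta}_1,\dots$ changes the computation.

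First, unbiasedness. Setting $\widetilde{Z}_K:=\sum_{n=0}^K\mathbf{1}\{N\ge n\}\,\widetilde{\Delta}_n/q_n$ and using that $N$ is independent of the increments together with $E[\mathbf{1}\{N\ge n\}]=q_n$, one gets $E(\widetilde{Z}_K)=\sum_{n=0}^K a_n=E(Y_K)$, which converges to $E(Y)$ because $L^2$ convergence $Y_n\to Y$ forces $E(Y_n)\to E(Y)$. Hence, once $\widetilde{Z}\in L^2$ is established with $\widetilde{Z}_K\to\widetilde{Z}$ in $L^2$ (and therefore in $L^1$), unbiasedness $E(\widetilde{Z})=E(Y)$ follows; the remaining work is to evaluate $E(\widetilde{Z}^2)$ and to check that (\ref{9}) makes it finite.

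For the second moment, expand $\widetilde{Z}^2=\sum_{j,k\ge0}\mathbf{1}\{N\ge j\}\mathbf{1}\{N\ge k\}\,\widetilde{\Delta}_j\widetilde{\Delta}_k/(q_jq_k)$ and take expectations. Independence of $N$ from the increments gives $E[\mathbf{1}\{N\ge j\}\mathbf{1}\{N\ge k\}]=P(N\ge\max(j,k))=q_{\max(j,k)}$, and here is the decisive difference from $Z$: since $\widetilde{\Delta}_j$ and $\widetilde{\Delta}_k$ are independent whenever $j\ne k$, the off-diagonal expectations factor as $E[\widetilde{\Delta}_j\widetilde{\Delta}_k]=a_ja_k$, with no surviving covariance (this is exactly the term the coupled estimator retains). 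Writing $E[\widetilde{\Delta}_n^2]=\operatorname{var}(Y_n-Y_{n-1})+a_n^2$ and folding $a_n^2/q_n$ into the double sum along its diagonal, we obtain
\[
E(\widetilde{Z}^2)=\sum_{n\ge0}\frac{\operatorname{var}(Y_n-Y_{n-1})}{q_n}+\sum_{j,k\ge0}\frac{q_{\max(j,k)}}{q_jq_k}\,a_ja_k .
\]
To evaluate the double sum, note that $(q_n)$ non-increasing gives $q_{\max(j,k)}=\min(q_j,q_k)$, hence $q_{\max(j,k)}/(q_jq_k)=1/\max(q_j,q_k)$, which equals $1/q_j$ for $j\le k$ and $1/q_k$ for $k\le j$. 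Splitting the sum along these two regions and using symmetry in $(j,k)$,
\[
\sum_{j,k\ge0}\frac{a_ja_k}{\max(q_j,q_k)}=\sum_{j\ge0}\frac{a_j}{q_j}\Bigl(a_j+2\sum_{k>j}a_k\Bigr)=\sum_{j\ge0}\frac{a_j\bigl[(E(Y)-E(Y_{j-1}))+(E(Y)-E(Y_j))\bigr]}{q_j},
\]
where the last step uses the telescoping identities $\sum_{k\ge j}a_k=E(Y)-E(Y_{j-1})$ and $\sum_{k>j}a_k=E(Y)-E(Y_j)$. Since $a_j=(E(Y)-E(Y_{j-1}))-(E(Y)-E(Y_j))$, each numerator is the difference of squares $(E(Y)-E(Y_{j-1}))^2-(E(Y)-E(Y_j))^2$. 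Substituting back gives $E(\widetilde{Z}^2)=\sum_{n\ge0}\widetilde{v}_n/q_n$ with $\widetilde{v}_n$ exactly as stated, and the finiteness of this series---equivalently $\widetilde{Z}\in L^2$---follows from (\ref{9}) by the same estimates as for Theorem~\ref{thm2.1}, using that $|\widetilde{v}_n|$ is dominated by a fixed multiple of $E[(Y_n-Y)^2]+E[(Y_{n-1}-Y)^2]$.

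The step I expect to be the main obstacle is the analytic bookkeeping rather than the algebra. Because the means $a_j$ are not sign-definite, interchanging the expectation with the infinite double sum and rearranging that sum (splitting into the regions $j\le k$ and $k\le j$, telescoping the inner series) both require an absolute-convergence argument, which has to be drawn from (\ref{9}) together with the monotonicity of $q_n$ and of $E[(Y_n-Y)^2]$---precisely the ingredients that already underlie Theorem~\ref{thm2.1}, and that also supply $\widetilde{Z}\in L^2$ and the $L^2$ convergence of the partial sums invoked for unbiasedness. By contrast, the variance contribution $\sum_{n}\operatorname{var}(Y_n-Y_{n-1})/q_n$, being a series of non-negative terms, is handled at once by Tonelli's theorem.
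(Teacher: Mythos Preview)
The paper does not prove this statement; it simply quotes it as Theorem~2 of Rhee and Glynn~\cite{RG}, so there is no in-paper argument to compare against. Your approach is the standard direct one and matches what Rhee--Glynn do: expand the second moment, use the mutual independence of the $\widetilde{\Delta}_n$ to factor the off-diagonal expectations as $a_ja_k$, and telescope the resulting mean terms into the difference-of-squares $(E(Y)-E(Y_{j-1}))^2-(E(Y)-E(Y_j))^2$. Your identification of the sum--expectation interchanges as the real technical content is accurate.

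One caveat on the last step. The closing claim that finiteness follows from~(\ref{9}) via the domination $|\widetilde v_n|\le C\bigl(E[(Y_n-Y)^2]+E[(Y_{n-1}-Y)^2]\bigr)$ is not self-contained: condition~(\ref{9}) as written controls $\sum_n E[(Y_n-Y)^2]/q_n$ but not the shifted sum $\sum_n E[(Y_{n-1}-Y)^2]/q_n$, and when the ratios $q_n/q_{n+1}$ are unbounded this index shift can make the latter diverge. The robust route---and the one actually taken in Rhee--Glynn---is the one you already outline for unbiasedness: do the entire second-moment computation for the finite truncations $\widetilde Z_K$, show the sequence is $L^2$-Cauchy using~(\ref{9}), and then pass to the limit. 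The identity $E(\widetilde Z^2)=\sum_n\widetilde v_n/q_n$ then holds as a limit of partial sums rather than by an absolute-convergence bound on $|\widetilde v_n|$.
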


There is a third unbiased estimator introduced, referred to as the single term estimator; see Rhee and Glynn \cite{RG} for more discussions. In this article, we focus on the coupled sum estimator. The analysis of the independent sum estimator is very similar.

Note that the distribution for $N$ needs to be specified. The minimum requirement is to choose a distribution that satisfies (\ref{9}). To maximize the efficiency of the unbiased estimator $Z$, Rhee and Glynn \cite{RG} proposed finding a distribution that minimizes the product $E(\tau) \times var(Z)$. Here, $\tau$ represents the computational time required to generate a sample of $Z$ and
\[
E(\tau)=E\left(\sum_{n=0}^{N}t_n\right)=\sum_{n=0}^{\infty}t_{n}P(N\geq n),
\]
where $t_{n}$ is the expected computational time to calculate $Y_{n}$; For the variance $var(Z)$, it is obtained from Theorem \ref{thm2.1} that
\[
var(Z)=E(Z^2)-(E(Z))^2=\sum_{n=0}^{\infty}\frac{v_n}{P(N\geq n)}-(E(Y))^{2}.
\]
Therefore, the problem of minimizing $E(\tau) \times var(Z)$ can be written as the following optimization problem:
\begin{equation}
\begin{aligned} \label{Problem1}
\min\limits_{F}\quad& g(F):=\left(\sum_{n=0}^{\infty}\frac{\beta_n}{F_n}\right)\left(\sum_{n=0}^{\infty}t_n F_n\right)\\
s.t.\quad & F_i\geq F_{i+1},\forall i\geq0\\
& F_i>0,\forall i\geq0\\
& F_0=1,
\end{aligned}
\end{equation}
where $\beta_{0}=v_{0}-(E(Y))^2$, $\beta_{n}=v_{n}$, $n\geq 1$ and $F_{n}=P(N\geq n)$. We assume that $(\beta_i,i\geq0)$ is a nonnegative sequence and $t_i,i\geq0$ are bounded below by a positive constant, so that there exists a solution to the optimization problem; see Proposition 2 in Rhee and Glynn \cite{RG}.

Since in general, there is no simple formula to calculate the optimal $F_{n}$ for all $n\geq 0$, Rhee and Glynn \cite{RG} considered a finite version of the optimization problem as follows:
\begin{equation}
\begin{aligned}\label{Problem2}
\min\limits_{F}\quad& g(F):=\left(\sum_{n=0}^{m}\frac{\beta_n}{F_n}\right)\left(\sum_{n=0}^{m}t_n F_n\right)\\
s.t.\quad & F_i\geq F_{i+1},\forall i\geq0\\
& F_i>0,\forall i\geq0\\
& F_0=1
\end{aligned}
\end{equation}
for any $m\in \mathbb{N}$, and then provided a dynamic programming algorithm to compute the optimal $F$ with the order $O(m^3)$. We call the solution to the optimization problem (\ref{Problem2}) the ``$m$-truncated" optimal distribution.  

However, it is unclear whether the unbiased estimator $Z$ could potentially lose efficiency if one uses the ``$m$-truncated" optimal distribution instead of the optimal distribution with an infinite horizon. On the other hand, although heuristically one could make the ``$m$-truncated" optimal distribution sufficiently close to the optimal distribution by increasing the value of $m$, the calculation of the ``$m$-truncated" optimal distribution requires a prior estimation of $\beta_{n}$, $n=0,1,...,m$, which can be time-consuming when $m$ is large. Furthermore, with the same number of Monte Carlo samples, estimating $\beta_{n}$ with a high $n$ takes much more time than estimating it with a low $n$. This imposes a challenge: how to develop an algorithm to handle the optimal distribution with a low computation cost in prior estimation.

\section{M-truncated optimal distribution}

This section reviews the method proposed in Cui et.al. \cite{CLZZ} to calculate the ``$m$-truncated" optimal distribution in the optimization problem (\ref{Problem2}). Our analysis of the optimal distribution with an infinite horizon and the development of an adaptive algorithm are based on their results.

The main approach in Cui et.al. \cite{CLZZ} is to convert the original optimization problem to its dual problem, which is formulated as
\begin{equation}
\begin{aligned}\label{Dualpro}
\min\limits_{F}\quad& \sum_{n=0}^{m}\left(\frac{\beta_n}{F_n}+\mu t_n F_n\right)\\
s.t.\quad & F_i\geq F_{i+1},\forall i\geq0\\
& F_i>0,\forall i\geq0\\
\end{aligned},
\end{equation}
with $\mu>0$. Let $\widetilde{F}=(\widetilde{F}_{0},\widetilde{F}_{1},...,\widetilde{F}_{m})$ be the solution. The connection between the original optimization problem and its dual problem is represented as Proposition \ref{Dual} below:

\begin{proposition} \label{Dual}
If $\beta_{n}$ and $t_{n}$ are positive for all $n$, then the solution $\widetilde{F}$ to the dual problem (\ref{Dualpro}) exists and for any $\mu>0$, 
\[
F^*=\frac{\widetilde{F}}{\widetilde{F}_{0}}
\]
is the solution to the optimization problem (\ref{Problem2}).
\end{proposition}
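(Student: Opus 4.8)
The plan is to lean on two elementary facts: the primal objective $g$ in (\ref{Problem2}) is invariant under rescaling $F\mapsto cF$ for $c>0$, and, for a fixed ``direction'' $F$, the dual objective in (\ref{Dualpro}) is minimized over the scaling parameter $c$ at a value that equals $\sqrt{g(F)}$ up to the constant $2\sqrt{\mu}$, by AM--GM. Throughout, write $\mathcal{C}$ for the set of finite sequences $F=(F_0,\dots,F_m)$ with $F_i\ge F_{i+1}$ and $F_i>0$ for all $i$; this is the common feasible set of (\ref{Dualpro}) and, after additionally imposing $F_0=1$, of (\ref{Problem2}), and it is a cone, so $cF\in\mathcal{C}$ whenever $F\in\mathcal{C}$ and $c>0$. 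Set $A(F)=\sum_{n=0}^m\beta_n/F_n$ and $B(F)=\sum_{n=0}^m t_nF_n$, so that $g(F)=A(F)B(F)$ and the dual objective is $\Phi_\mu(F)=A(F)+\mu B(F)$; since there are finitely many strictly positive $\beta_n$ and $t_n$, both $A$ and $B$ are finite and strictly positive on $\mathcal{C}$.

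First I would establish that a minimizer $\widetilde{F}$ of the dual problem exists. The function $\Phi_\mu$ is continuous on $\mathcal{C}$ and coercive in the sense that it tends to $+\infty$ as any coordinate $F_n\to 0^+$ (through the term $\beta_n/F_n$, using $\beta_n>0$) or as any $F_n\to\infty$ (through $\mu t_n F_n$, using $t_n>0$). Hence a minimizing sequence stays in a box $\prod_{n=0}^m[a_n,b_n]$ with each $a_n>0$; a convergent subsequence then has its limit in $\mathcal{C}$, because the constraints $F_i\ge F_{i+1}$ and $F_n\ge a_n>0$ pass to the limit, and by continuity this limit attains the infimum.

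Next I would run the scaling argument. For $F\in\mathcal{C}$ and $c>0$ one has $\Phi_\mu(cF)=A(F)/c+\mu c\,B(F)\ge 2\sqrt{\mu A(F)B(F)}=2\sqrt{\mu\,g(F)}$, with equality precisely at $c=\sqrt{A(F)/(\mu B(F))}$. Applying this with $F=\widetilde{F}$: since $\widetilde{F}$ minimizes $\Phi_\mu$ over all of $\mathcal{C}$, and $\mathcal{C}$ contains the entire ray $\{c\widetilde{F}:c>0\}$, the point $\widetilde{F}$ is optimal along that ray, so $\Phi_\mu(\widetilde{F})=2\sqrt{\mu\,g(\widetilde{F})}$. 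On the other hand, for an arbitrary $F'$ feasible for (\ref{Problem2}) we have $F'\in\mathcal{C}$, hence $\Phi_\mu(\widetilde{F})\le\min_{c>0}\Phi_\mu(cF')=2\sqrt{\mu\,g(F')}$. Combining the two displays gives $g(\widetilde{F})\le g(F')$. Finally, $\widetilde{F}\in\mathcal{C}$ forces $\widetilde{F}_0>0$, so $F^*=\widetilde{F}/\widetilde{F}_0$ is well defined, lies in $\mathcal{C}$, and satisfies $F^*_0=1$, i.e.\ it is feasible for (\ref{Problem2}); by scale invariance $g(F^*)=g(\widetilde{F})\le g(F')$ for every feasible $F'$, so $F^*$ solves (\ref{Problem2}), and this holds for every $\mu>0$.

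The hard part is really just the existence step: one must make sure the infimum of $\Phi_\mu$ is not lost to the boundary $F_n\to 0$ or to the unbounded directions $F_n\to\infty$, which is exactly what the coercivity remark rules out. Everything after that is the scale-invariance-plus-AM--GM bookkeeping, following the approach of Cui et al.\ \cite{CLZZ}. (If one also wants the definite article ``the solution'' in the statement, uniqueness of the primal minimizer is inherited from Proposition~2 of Rhee and Glynn \cite{RG}.)
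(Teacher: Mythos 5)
Your argument is correct and self-contained. Note, however, that the paper offers no proof of this proposition at all: it simply points to Theorems 2.2 and 2.3 of Cui et al.\ \cite{CLZZ} and their supplementary material. What you have written is the natural way to make that duality rigorous, and it almost certainly mirrors the cited argument: the primal objective $g(F)=A(F)B(F)$ is invariant under positive rescaling, the dual objective $\Phi_\mu(F)=A(F)+\mu B(F)$ is optimized along any ray $\{cF\}$ at the value $2\sqrt{\mu\,g(F)}$ by AM--GM, and hence minimizing $\Phi_\mu$ over the cone $\mathcal{C}$ is the same as minimizing $g$ over the normalized slice $\{F_0=1\}$. Two small checks you handle correctly and that are worth keeping explicit: (i) the existence step must rule out escape of a minimizing sequence to the open boundary $F_n\to0^+$ as well as to infinity, and your coercivity bound $\beta_n/M\le F_n^{(k)}\le M/(\mu t_n)$ (where $M$ bounds the dual objective along the sequence) does exactly that, using both $\beta_n>0$ and $t_n>0$; (ii) the claim that $\Phi_\mu(\widetilde F)=2\sqrt{\mu g(\widetilde F)}$ needs the observation that $\widetilde F$, being a global minimizer over $\mathcal{C}$, is in particular a minimizer along its own ray, which forces equality in AM--GM at $c=1$. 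Your parenthetical on uniqueness is also the right call: the proposition says ``the'' solution, and uniqueness of the normalized primal minimizer is inherited from Proposition~2 of Rhee and Glynn \cite{RG} rather than from the duality argument itself.
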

\begin{proof}
See Theorems 2.2 and 2.3 in Cui et.al. \cite{CLZZ} and the related proofs in their supplementary material.
\end{proof}

Proposition \ref{Dual} means that computing the $m$-truncated optimal distribution in Problem (\ref{Problem2}) is effectively the problem of computing $\widetilde{F}$ in Problem (\ref{Dualpro}). Cui et.al. \cite{CLZZ} provided an algorithm to calculate $\widetilde{F}$. Their algorithm has the computational complexity $O(m)$, which improves on the dynamic programming algorithm in Rhee and Glynn \cite{RG} with the computational complexity $O(m^3)$. 

Throughout this article, we always assume that the assumption in Theorem \ref{Dual} is satisfied (i.e., $\beta_{n}$ and $t_{n}$ are positive for all $n$). Let
\[
f_{n}(F_{n})=\frac{\beta_n}{F_n}+\mu t_n F_n.
\]
It is easy to show that $f_{n}$ is convex, and the minimum of $f_{n}$ can be achieved at $x_{n}:=\sqrt{\frac{\beta_{n}}{\mu t_{n}}}$ by solving the equation $f^{'}(x_{n})=0$. Let us move on to a slightly more complicated problem with two variables:
\[
\begin{aligned}
\min\limits_{F_{n}, F_{n+1}}\quad& \left[f_{n}(F_{n})+f_{n+1}(F_{n+1})\right]\\
s.t.\quad & F_n\geq F_{n+1}>0,\\
\end{aligned}
\]
and let its solution be $(\widetilde{F}_{n},\widetilde{F}_{n+1})$.  Let $(x_n,x_{n+1})=\left(\sqrt{\frac{\beta_{n}}{\mu t_{n}}},\sqrt{\frac{\beta_{n+1}}{\mu t_{n+1}}}\right)$. If $x_{n}> x_{n+1}$, then $(\widetilde{F}_{n},\widetilde{F}_{n+1})=(x_n,x_{n+1})$; otherwise, we obtain $(\widetilde{F}_{n},\widetilde{F}_{n+1})=(x^*,x^*)$, where $x^*$ is the solution to the following problem
\[
\min\limits_{x}[f_n(x)+f_{n+1}(x)],
\]
and $x_n\leq x^* \leq x_{n+1}$. Precisely, $x^*$ satisfies the equation $f_{n}^{'}(x^*)+f_{n+1}^{'}(x^*)=0$, and hence
\[
x^*=\sqrt{\left(\beta_n+\beta_{n+1}\right)/\left( \mu t_n+\mu t_{n+1}\right)}.
\]
The steps above to calculate the solution $(\widetilde{F}_{n},\widetilde{F}_{n+1})$ imply that when $x_n\leq x_{n+1}$, we `combine' the two functions $f_{n}$ and $f_{n+1}$ into a single function that is easy to analyse (i.e. we let $f_{n,n+1}:=f_{n}+f_{n+1}$ and focus on $\min\limits_{x} f_{n,n+1}(x)$). Proposition \ref{cui} provides a theoretical basis for the steps above and their generalizations, which is adapted from Proposition 2.1 in Cui et.al. \cite{CLZZ}.   
\begin{proposition} \label{cui}
Let $f_{n}(F_{n})=\frac{\beta_n}{F_n}+\mu t_n F_n$. Let $(\widetilde{F}_{s},..,\widetilde{F}_{t})$ be the solution to $\min\limits_{F_{s},.., F_{t}}\sum_{n=s}^{t}f_{n}(F_{n})$ such that $F_{s}\geq...\geq F_{t}>0$ and let $(\widetilde{F}_{t+1},..,\widetilde{F}_{l})$ be the solution to $\min\limits_{F_{t+1},.., F_{l}}\sum_{n=t+1}^{l}f_{n}(F_{n})$ with a similar constraint, where $s<l$. Suppose that $\widetilde{F}_{s}=...=\widetilde{F}_{t}=V_{1}$ and $\widetilde{F}_{t+1}=...=\widetilde{F}_{l}=V_{2}$, where
\[
V_{1}=\sqrt{\frac{\sum_{n=s}^{t}\beta_{n}}{\mu\sum_{n=s}^{t} t_{n}}}, \quad V_{2}=\sqrt{\frac{\sum_{n=t+1}^{l}\beta_{n}}{\mu\sum_{n=t+1}^{l} t_{n}}}.
\]
If $V_{1}\leq V_{2}$, then we have
\[
\bar{V}=\sqrt{\frac{\sum_{n=s}^{l}\beta_{n}}{\mu\sum_{n=l}^{l} t_{n}}}\in [V_{1},V_{2}],
\]
such that $(F_{s}^{*},..,F_{l}^{*})=(\bar{V},...,\bar{V})$ is the solution to $\min\limits_{F_{s},.., F_{l}}\sum_{n=s}^{l}f_{n}(F_{n})$ with the constraint $F_{s}\geq...\geq F_{l}>0$.
\end{proposition}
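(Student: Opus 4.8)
The plan is to treat this as a convex program and verify optimality through the Karush--Kuhn--Tucker (KKT) conditions. The objective $\sum_{n=s}^{l} f_n(F_n)$ is separable and strictly convex, since $f_n''(x)=2\beta_n/x^{3}>0$ on $(0,\infty)$, and the feasible set $\{\,F_s\ge F_{s+1}\ge\cdots\ge F_l>0\,\}$ is cut out by affine inequalities; hence the minimizer is unique, and a feasible point is the global minimizer if and only if it satisfies the KKT conditions (for affine constraints no further constraint qualification is needed). So the goal is to show that the constant vector $(\bar V,\dots,\bar V)$ with $\bar V^{2}=\frac{\sum_{n=s}^{l}\beta_n}{\mu\sum_{n=s}^{l}t_n}$ is feasible and admits valid multipliers. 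The containment $\bar V\in[V_1,V_2]$ comes first and is immediate: writing $A_j,B_j$ for the numerator and denominator of $V_j^{2}$, one has $\bar V^{2}=(A_1+A_2)/(B_1+B_2)$, so the mediant inequality $A_1/B_1\le(A_1+A_2)/(B_1+B_2)\le A_2/B_2$ (valid because $V_1^{2}\le V_2^{2}$) gives $V_1^{2}\le\bar V^{2}\le V_2^{2}$, and taking positive square roots gives $V_1\le\bar V\le V_2$; in particular $(\bar V,\dots,\bar V)$ is feasible.

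Next I would write out the KKT system. Attach multipliers $\lambda_k\ge0$ to the constraints $F_{k+1}-F_k\le0$, $s\le k\le l-1$; since $\bar V>0$ the positivity constraints are inactive. Reading stationarity coordinate by coordinate and telescoping from the $F_s$-equation forces $\lambda_k=\sum_{n=s}^{k}f_n'(\bar V)$ for $s\le k\le l-1$, while the $F_l$-equation reduces to $\sum_{n=s}^{l}f_n'(\bar V)=0$; since $f_n'(x)=-\beta_n/x^{2}+\mu t_n$, this identity holds precisely because of the defining value of $\bar V$. As all ordering constraints are active, complementary slackness is automatic, so the only thing left to check is the nonnegativity $\sum_{n=s}^{k}f_n'(\bar V)\ge0$ for every $s\le k\le l-1$.

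To get this I would use two ingredients: (i) $x\mapsto f_n'(x)$ is increasing on $(0,\infty)$ (its derivative is $2\beta_n/x^{3}>0$), so $f_n'(V_1)\le f_n'(\bar V)\le f_n'(V_2)$ for every $n$; and (ii) applying the same stationarity/telescoping analysis to the two sub-problems, whose solutions are by hypothesis the constant vectors $(V_1,\dots,V_1)$ and $(V_2,\dots,V_2)$, yields multipliers that must be nonnegative, i.e.\ $\sum_{n=s}^{k}f_n'(V_1)\ge0$ for $s\le k\le t-1$ (with $\sum_{n=s}^{t}f_n'(V_1)=0$) and $\sum_{n=k+1}^{l}f_n'(V_2)\le0$ for $t+1\le k\le l-1$ (with $\sum_{n=t+1}^{l}f_n'(V_2)=0$). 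Then for $s\le k\le t$ one gets $\sum_{n=s}^{k}f_n'(\bar V)\ge\sum_{n=s}^{k}f_n'(V_1)\ge0$ from $\bar V\ge V_1$; and for $t\le k\le l-1$ one uses $\sum_{n=s}^{k}f_n'(\bar V)=-\sum_{n=k+1}^{l}f_n'(\bar V)\ge-\sum_{n=k+1}^{l}f_n'(V_2)\ge0$ from $\bar V\le V_2$ together with $\sum_{n=s}^{l}f_n'(\bar V)=0$. These two index ranges jointly cover $s\le k\le l-1$, so every $\lambda_k\ge0$, the KKT conditions hold, and by strict convexity $(\bar V,\dots,\bar V)$ is the (unique) solution.

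\textbf{Expected main obstacle.}
Stationarity is just bookkeeping; the substantive step is the sign condition $\sum_{n=s}^{k}f_n'(\bar V)\ge0$, and producing it is exactly where the structural hypotheses ($\widetilde F_s=\cdots=\widetilde F_t=V_1$, $\widetilde F_{t+1}=\cdots=\widetilde F_l=V_2$, $V_1\le V_2$) enter --- via the partial-sum sign conditions inherited from the two sub-problems and the monotonicity of $f_n'$. Some care is also needed for the degenerate cases $s=t$ and/or $t+1=l$, where a block is a singleton, carries no ordering multiplier, and the corresponding partial-sum condition is vacuous (the argument then goes through verbatim), and for stating cleanly why KKT is necessary and sufficient here (a strictly convex separable objective on the open set $\{F_n>0\}$ with affine inequality constraints).
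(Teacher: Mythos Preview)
The paper does not give its own proof of this proposition: it is stated as an adaptation of Proposition~2.1 in Cui et al.\ \cite{CLZZ}, preceded only by an illustrative two-variable case worked out directly. So there is nothing in the paper to compare your argument against line by line.

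Your KKT-based proof is correct. The objective is strictly convex and separable on the open positive orthant, the constraints are affine, so KKT is necessary and sufficient and the minimizer is unique. Your telescoping computation of the multipliers $\lambda_k=\sum_{n=s}^{k}f_n'(\bar V)$ is right, and the crucial nonnegativity $\lambda_k\ge 0$ is handled exactly as it should be: for $s\le k\le t$ you inherit $\sum_{n=s}^{k}f_n'(V_1)\ge 0$ from necessity of KKT at the first block's optimum and push it up to $\bar V$ via monotonicity of $f_n'$; for $t\le k\le l-1$ you use $\sum_{n=s}^{l}f_n'(\bar V)=0$ to flip to a tail sum and then bound by the second block's KKT inequalities at $V_2$. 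The mediant inequality for $\bar V\in[V_1,V_2]$ is the standard one. The degenerate singleton-block cases are indeed trivial, as you note.

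In spirit your argument is the natural rigorous version of the paper's two-variable sketch (where the combined minimizer of $f_n+f_{n+1}$ lies between the individual minimizers and hence satisfies the ordering constraint with equality); the KKT formulation is what makes that intuition go through cleanly for arbitrary block sizes, precisely because the sign conditions on the partial sums of $f_n'$ are exactly the sub-problem multipliers. Nothing is missing.
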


Based on Proposition \ref{cui}, Cui et.al. \cite{CLZZ} proposed an algorithm to calculate the solution to Problem (\ref{Dualpro}), and then compute the $m$-truncated optimal distribution in Problem (\ref{Problem2}) using Proposition \ref{Dual}.

\section{Optimal distribution with an infinite horizon}
Both the algorithm in Rhee and Glynn \cite{RG} and that in Cui et.al.\cite{CLZZ} can only produce the $m$-truncated optimal distribution. In this section, we focus on deriving a simple representation for the optimal distribution with an infinite horizon, if $\beta_{n}$ and $t_{n}$ satisfy certain mild assumptions.

In the context of unbiased estimators for SDE models, $Y_{n}$ is an approximation of $Y$ using a time-discrete scheme typically with step size $T/2^n$, where $T>0$ is the time horizon. Since $t_{n}$ is the expectation of the computational time needed to generate $Y_{n}$, it is clear that $t_{n}$ is proportional to $2^n$. Thus, it is reasonable to assume that $t_{n}=2^n$, $n=0,1,...$. On the other hand, for a time-discrete scheme that is strongly convergent with order $p>1/2$, i.e.,
\[
E[(Y_{n}-Y)^2]=O(2^{-2pn}),
\]
it is easy to combine it with an unbiased estimator, that has a finite variance and computational time. Then, by the definition of $\beta_{n}$, we have $\beta_{n}=O(2^{-2pn})$, which usually leads to $\beta_{n}/\beta_{n+1}\approx 4^p$ for sufficiently large $n$. Now, we are in a position to impose Assumption \ref{aspt1} below: 

\begin{assumption}\label{aspt1}
Suppose that $t_{n}=2^n$ and $\beta_{n}>0$ for all n. Suppose that there exists $m\geq1$, such that $4^p-\epsilon<\beta_{n}/\beta_{n+1}<4^p+\epsilon$ for all $n\geq m$, where $p>1/2$ and $\epsilon \in (0,1)$.
\end{assumption} 

Assumption \ref{aspt1} implies that $\beta_{n}>\beta_{n+1}>\beta_{n+2}>...$ for all $n\geq m$. In this section, we always assume that Assumption \ref{aspt1} is satisfied. Recall that the infinite-horizon optimization problem(i.e., Problem (\ref{Problem1})) is
\begin{equation}
\begin{aligned} \label{Problem3}
\min\limits_{F}\quad& g(F):=\left(\sum_{n=0}^{\infty}\frac{\beta_n}{F_n}\right)\left(\sum_{n=0}^{\infty}t_n F_n\right)\\
s.t.\quad & F_i\geq F_{i+1},\forall i\geq0\\
& F_i>0,\forall i\geq0\\
& F_0=1.
\end{aligned}
\end{equation}
Our goal is to derive a simple representation for the solution under Assumption \ref{aspt1}.

Let $J:=(L_i,i\geq0)$ be a strictly increasing integer-valued sequence such that $L_0=0$ and $\beta_{i}(J)/t_i(J)$ is a strictly decreasing function of $i$, where
\[
\beta_i(J):=\sum_{k=L_i}^{L_{i+1}-1}\beta_k, \quad t_i(J):=\sum_{k=L_i}^{L_{i+1}-1}t_k.
\]
Note that $J$ may not be unique and we let $\mathcal{J}$ be the set of all $J$. We consider the following sub-problem of Problem (\ref{Problem3}):
\begin{equation}
\begin{aligned} \label{pro2}
\min\limits_{\overline{F}}\quad& g_{J}(\overline{F}):=\left(\sum_{n=0}^{\infty}\frac{\beta_n (J)}{\overline{F}_{n}}\right)\left(\sum_{n=0}^{\infty}t_n (J)\overline{F}_{n}\right)\\
s.t.\quad & \overline{F}_i\geq \overline{F}_{i+1},\forall i\geq0\\
& \overline{F}_i>0,\forall i\geq0\\
& \overline{F}_0=1.
\end{aligned}
\end{equation}
Problem (\ref{pro2}) can be regarded as Problem (\ref{Problem3}) subject to an additional constraint
\[
F_{L_i}=F_{L_{i}+1}=...=F_{L_{i+1}-1},\quad\forall i\geq0 
\]
corresponding to $J$. For any $J\in \mathcal{J}$, Proposition 1 in Rhee and Glynn \cite{RG} implies that the solution to Problem (\ref{pro2}) is 
\[
\overline{F}_{i}=\sqrt{\frac{\beta_i(J)/t_i(J)}{\beta_0(J)/t_0(J)}},\quad\forall i\geq0.
\]
Furthermore, let $F^{*}:=(F_i^*,i\geq0)$ be the solution to Problem (\ref{Problem3}). Proposition 3 and Theorem 3 in Rhee and Glynn \cite{RG} show that there is an optimal $J^{*}\in \mathcal{J}$, such that the solution to Problem (\ref{Problem3}) is the solution to Problem (\ref{pro2}) associated with $J=J^*:=(L_i^*,i\geq0)$, i.e., the solution is
\begin{equation}
F_{L_{i}^*}^{*}=F_{L_{i}^*+1}^{*}=...=F_{L_{i+1}^*-1}^{*}=\overline{F}_{i}^{*}=\sqrt{\frac{\beta_i(J^*)/t_i(J^*)}{\beta_0(J^*)/t_0(J^*)}},\quad\forall i\geq0. \label{solutn}
\end{equation}
Thus, searching for the solution to Problem (\ref{Problem3}) is equivalent to finding the optimal $J^*$. 

Let $\mathcal{J}^{(L_1,L_2,...,L_l)}\subseteq \mathcal{J}$, such that the first $l+1$ elements of all $J\in \mathcal{J}^{(L_1,L_2,...,L_l)}$ are $L_0,L_1,...,L_l$, where $L_l\geq m$ and $\frac{\beta_{l-1}(J)}{t_{l-1}(J)}>\frac{\beta_{L_l}}{t_{L_l}}$. Proposition \ref{pro1} below provides a representation for $\mathcal{J}^{(L_1,L_2,...,L_l)}$.

\begin{proposition}\label{pro1}
Suppose that Assumption \ref{aspt1} is satisfied. If $\mathcal{J}^{(L_1,L_2,...,L_l)}$ is non-empty, then we obtain
\[
\mathcal{J}^{(L_1,L_2,...,L_l)}=\{(L_0,L_1,L_2,...,L_l,L_l+k_1,L_{l+1}+k_2,...), k_i \in \mathbb{N}^{+}\}.
\]
\end{proposition}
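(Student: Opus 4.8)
The plan is to prove the two set inclusions separately, the inclusion ``$\supseteq$'' being the substantive one. For ``$\subseteq$'', note that any $J\in\mathcal{J}^{(L_1,\dots,L_l)}$ is, by definition of $\mathcal{J}$, a strictly increasing integer sequence whose first $l+1$ entries are $L_0,L_1,\dots,L_l$; setting $k_i:=L_{l+i}-L_{l+i-1}\in\mathbb{N}^{+}$ for $i\geq 1$ exhibits $J$ in the form on the right-hand side. This direction uses nothing beyond the definitions.

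For ``$\supseteq$'', fix arbitrary $k_1,k_2,\dots\in\mathbb{N}^{+}$ and set $\widetilde{J}:=(L_0,L_1,\dots,L_l,L_l+k_1,L_l+k_1+k_2,\dots)$. Since $\mathcal{J}^{(L_1,\dots,L_l)}$ is assumed non-empty, I fix a witness $J$ in it. The requirements ``the first $l+1$ entries equal $L_0,\dots,L_l$'', ``$L_l\geq m$'', and ``$\beta_{l-1}(\cdot)/t_{l-1}(\cdot)>\beta_{L_l}/t_{L_l}$'' all depend only on the shared prefix $L_0,\dots,L_l$, so $\widetilde{J}$ inherits each of them from $J$; the one thing left to verify is that $\widetilde{J}\in\mathcal{J}$, that is, that $i\mapsto\beta_i(\widetilde{J})/t_i(\widetilde{J})$ is strictly decreasing.

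The engine of the argument is the elementary observation that, under Assumption~\ref{aspt1}, the sequence $r_k:=\beta_k/t_k=\beta_k/2^k$ is strictly decreasing for $k\geq m$: indeed $r_k/r_{k+1}=2\,\beta_k/\beta_{k+1}>2(4^p-\epsilon)>2$, using $4^p>2$ (as $p>1/2$) and $\epsilon<1$. Combined with the mediant inequality --- for positive weights $w_k$ the weighted mean $\frac{\sum_k w_k r_k}{\sum_k w_k}$ lies between $\min_k r_k$ and $\max_k r_k$, and hence, when $r$ is decreasing along a window $[a,b]$ with $a\geq m$, the block ratio satisfies $r_b\leq\beta_{[a,b]}/t_{[a,b]}\leq r_a$, where $\beta_{[a,b]}:=\sum_{k=a}^{b}\beta_k$ and $t_{[a,b]}:=\sum_{k=a}^{b}2^k$ --- I then check monotonicity of $i\mapsto\beta_i(\widetilde{J})/t_i(\widetilde{J})$ over three index ranges. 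For $i\leq l-1$ this ratio equals $\beta_i(J)/t_i(J)$ (it depends only on the prefix), and strict decrease on $\{0,\dots,l-1\}$ is inherited from $J\in\mathcal{J}$. At the junction $i=l-1\to l$: the defining inequality gives $\beta_{l-1}(\widetilde{J})/t_{l-1}(\widetilde{J})>\beta_{L_l}/2^{L_l}=r_{L_l}$, while the $l$-th block of $\widetilde{J}$ is $[L_l,L_l+k_1-1]$ with left endpoint $\geq m$, so $\beta_l(\widetilde{J})/t_l(\widetilde{J})\leq r_{L_l}$, and chaining the two gives a strict inequality. For $i\geq l$, consecutive blocks are $[a_i,b_i]$ and $[b_i+1,b_{i+1}]$ with $a_i\geq L_l\geq m$, whence $\beta_i(\widetilde{J})/t_i(\widetilde{J})\geq r_{b_i}>r_{b_i+1}\geq\beta_{i+1}(\widetilde{J})/t_{i+1}(\widetilde{J})$, the strict middle step being precisely the monotonicity of $r$. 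This exhausts all $i$, so $\widetilde{J}\in\mathcal{J}$ and therefore $\widetilde{J}\in\mathcal{J}^{(L_1,\dots,L_l)}$.

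The only step that genuinely needs care is the junction $i=l-1\to l$: the block ending the prefix may begin below $m$ (nothing forces $L_{l-1}\geq m$), so its ratio is not directly controlled by the monotonicity of $r$; the role of the hypothesis $\beta_{l-1}(J)/t_{l-1}(J)>\beta_{L_l}/t_{L_l}$ built into the definition of $\mathcal{J}^{(L_1,\dots,L_l)}$ is exactly to bridge that last prefix block to the first post-prefix block, whose mediant is pinned below $r_{L_l}$. Everything else reduces to the two one-line estimates above (strict monotonicity of $r_k$ for $k\geq m$, and the mediant inequality). In particular, once $L_l\geq m$ the strict-decrease condition hidden inside ``$J\in\mathcal{J}$'' is automatically satisfied by every admissible extension of the prefix, which is precisely the content of the proposition.
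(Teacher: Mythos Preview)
Your proof is correct and follows essentially the same approach as the paper: both establish the trivial inclusion ``$\subseteq$'' from the definitions, and for ``$\supseteq$'' both use that $\beta_k/t_k$ is strictly decreasing for $k\geq m$ together with the mediant inequality to sandwich each block ratio between its endpoint ratios, then chain these to obtain the strict decrease of $i\mapsto\beta_i(\widetilde{J})/t_i(\widetilde{J})$. Your write-up is in fact more careful than the paper's at the junction step $i=l-1\to l$ (correctly allowing $\beta_l(\widetilde{J})/t_l(\widetilde{J})\leq r_{L_l}$ with equality when $k_1=1$, and deriving strictness from the defining hypothesis $\beta_{l-1}(J)/t_{l-1}(J)>\beta_{L_l}/t_{L_l}$), whereas the paper writes a strict ``$>$'' there that should be ``$\geq$''.
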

\begin{proof}
By definition, we have
\[
\mathcal{J}^{(L_1,L_2,...,L_l)}\subseteq\{(L_0,L_1,L_2,...,L_l,L_l+k_1,L_{l+1}+k_2,...), k_i \in \mathbb{N}^{+}\}.
\]
Thus, it suffices to prove that the inverse is also true. For any $i\geq m$, we have $\beta_i >\beta_{i+1}>...>\beta_{i+k}$ and $t_i<t_{i+1}<...<t_{i+k}$, where $k\in \mathbb{N}^{+}$, so it holds that
\[
\frac{\beta_i}{t_i}>\frac{\sum_{j=i}^{i+k}\beta_{j}}{\sum_{j=i}^{i+k}t_{j}}>\frac{\beta_{i+k}}{t_{i+k}}.
\]
Then, for any $J=(L_0,L_1,L_2,...,L_l,L_l+k_1,L_{l+1}+k_2,...), k_i \in \mathbb{N}^{+}$, we obtain
\[
\frac{\beta_{0}(J)}{t_{0}(J)}>...>\frac{\beta_{l-1}(J)}{t_{l-1}(J)}>\frac{\beta_{L_l}}{t_{L_l}}>\frac{\beta_{l}(J)}{t_{l}(J)}>\frac{\beta_{l+1}(J)}{t_{l+1}(J)}>...
\]
which implies $J\in \mathcal{J}^{(L_1,L_2,...,L_l)}$. The proof is complete.
\end{proof}

Among all $J\in\mathcal{J}^{(L_1,L_2,...,L_l)}$, the most important one is
\[
J_{*}^{(L_1,L_2,...,L_l)}:=(L_0,L_1,L_2,...,L_l,L_l+1,L_l+2,...).
\]
For notational convenience, we write $J_{*}^{(L_1,L_2,...,L_l,L_l+1,L_l+2,...)}$ as $J_{*}^{L}$.

\begin{lemma}\label{lemma1}
Suppose that Assumption \ref{aspt1} is satisfied and $\mathcal{J}^{(L_{1},L_{2},...,L_{l})}$ is non-empty. Then for all $J\in \mathcal{J}^{(L_{1},L_{2},...,L_{l})}$, we have
\[
\min_{\overline{F}}  g_{J}(\overline{F})\geq \min_{\overline{F}}  g_{J_{*}^L}(\overline{F}).
\]
\end{lemma}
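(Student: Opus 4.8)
The plan is to show that among all $J\in\mathcal{J}^{(L_1,\dots,L_l)}$, the "densest" one $J_*^L$ minimizes $g_J$, by reducing to a finite comparison and invoking the combining principle of Proposition \ref{cui}. First I would recall from the preceding discussion that for each fixed $J$, the optimal value $\min_{\overline{F}} g_J(\overline{F})$ equals (up to the normalization by $\beta_0(J)/t_0(J)$) a sum of the form $\bigl(\sum_n \sqrt{\beta_n(J)t_n(J)\,\cdot\,(\text{stuff})}\bigr)$-type expression; more precisely, since the solution to Problem (\ref{pro2}) is $\overline{F}_i = \sqrt{\frac{\beta_i(J)/t_i(J)}{\beta_0(J)/t_0(J)}}$, plugging this back in gives the clean formula $\min_{\overline{F}} g_J(\overline{F}) = \left(\sum_{i=0}^\infty \sqrt{\beta_i(J)t_i(J)}\right)^2$. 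So the lemma is equivalent to the statement that $\sum_{i=0}^\infty \sqrt{\beta_i(J)t_i(J)}$ is minimized over $\mathcal{J}^{(L_1,\dots,L_l)}$ by $J = J_*^L$. Because all $J$ in this class agree on the first $l$ block-boundaries $L_0,\dots,L_l$, the first $l$ terms $\sqrt{\beta_i(J)t_i(J)}$ for $i=0,\dots,l-1$ are identical across the class, and the comparison reduces to the tail: we must show $\sum_{i\geq l}\sqrt{\beta_i(J)t_i(J)}$ is minimized by refining every remaining block into singletons $\{L_l\}, \{L_l+1\},\dots$.

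The heart of the argument is therefore a sub-additivity/concavity claim: for any finite block $\{a, a+1, \dots, b\}$ with $a\geq m$, grouping it as a single block is worse (larger $\sqrt{\beta t}$-contribution) than splitting it, i.e.
\[
\sqrt{\Bigl(\textstyle\sum_{k=a}^{b}\beta_k\Bigr)\Bigl(\textstyle\sum_{k=a}^{b}t_k\Bigr)} \;\geq\; \sum_{k=a}^{b}\sqrt{\beta_k t_k},
\]
and more generally that any coarsening of a partition of $\{L_l, L_l+1,\dots\}$ can only increase $\sum_i \sqrt{\beta_i(J)t_i(J)}$. Actually the inequality above is just the Cauchy–Schwarz inequality $\sum_k \sqrt{\beta_k t_k} \leq \sqrt{\sum_k \beta_k}\sqrt{\sum_k t_k}$ and holds with no monotonicity needed; iterating it shows that the fully-refined partition $J_*^L$ gives the smallest tail sum among \emph{all} partitions of $\{L_l,L_l+1,\dots\}$, in particular among those arising from $\mathcal{J}^{(L_1,\dots,L_l)}$. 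I would need to check one technical point: that the rearranged series $\sum_{i\geq l}\sqrt{\beta_i(J)t_i(J)}$ actually converges for $J_*^L$ (so the bound is not vacuous) — this follows from Assumption \ref{aspt1}, since $\beta_n = O(2^{-2pn})$ and $t_n = 2^n$ give $\sqrt{\beta_n t_n} = O(2^{(1/2 - p)n})$, which is summable precisely because $p > 1/2$; and for a general $J$ in the class the term-by-term domination $\sqrt{\beta_i(J)t_i(J)} \geq \sum_{k\in \text{block } i}\sqrt{\beta_k t_k}$ combined with convergence of the refined sum means the comparison makes sense even when $g_J(\overline{F})$ for coarse $J$ could be $+\infty$.

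The main obstacle I anticipate is not the Cauchy–Schwarz step itself but justifying the \emph{reindexing}: a general $J\in\mathcal{J}^{(L_1,\dots,L_l)}$ partitions $\{L_l, L_l+1,\dots\}$ into consecutive blocks of sizes $k_1, k_2, \dots$, and I must argue carefully that $\min_{\overline{F}} g_J(\overline{F}) = \bigl(\sum_i \sqrt{\beta_i(J)t_i(J)}\bigr)^2$ remains valid in the infinite-horizon setting and that comparing two such series term-blockwise is legitimate — i.e., I should group the terms of the $J_*^L$-series according to the blocks of $J$ and apply the finite Cauchy–Schwarz inequality blockwise, then sum. A secondary subtlety is that the definition of $\mathcal{J}$ requires $\beta_i(J)/t_i(J)$ to be \emph{strictly} decreasing; Proposition \ref{pro1} already guarantees that every $J$ of the stated form lies in $\mathcal{J}^{(L_1,\dots,L_l)}$, so I get for free that both $J$ and $J_*^L$ are admissible and the formula for the optimal value applies to each. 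Once these bookkeeping points are in place, the chain $\min g_J = \bigl(\sum_i\sqrt{\beta_i(J)t_i(J)}\bigr)^2 \geq \bigl(\sum_i\sqrt{\beta_i(J_*^L)t_i(J_*^L)}\bigr)^2 = \min g_{J_*^L}$ closes the proof.
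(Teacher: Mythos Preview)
Your proposal is correct and follows essentially the same approach as the paper: both derive the closed form $\min_{\overline{F}} g_J(\overline{F}) = \bigl(\sum_i \sqrt{\beta_i(J)t_i(J)}\bigr)^2$ and then use the Cauchy--Schwarz inequality $\sqrt{(\sum \beta_k)(\sum t_k)} \geq \sum\sqrt{\beta_k t_k}$ to show that coarsening blocks can only increase the objective. Your blockwise application of the full Cauchy--Schwarz inequality is in fact slightly cleaner than the paper's iteration of the two-term version $\sqrt{(a_1+a_2)(b_1+b_2)}\geq \sqrt{a_1 b_1}+\sqrt{a_2 b_2}$, which leads the paper to an informal ``combine terms up to countably infinite times'' argument that your direct grouping avoids.
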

\begin{proof}
For $J_{*}^{L}\in \mathcal{J}^{(L_{1},L_{2},...,L_{l})}\subseteq \mathcal{J}$, substituting (\ref{solutn}) into (\ref{pro2}) yields
\[
\min_{\overline{F}}  g_{J_{*}^L}(\overline{F})=\left(\sum_{n=0}^{\infty}\sqrt{\beta_n(J_{*}^{L})t_{n}(J_{*}^{L})}\right)^2.
\]
Let $J=(L_{0},L_{1},L_{2},...,L_{l},L_{l}+2,L_{l}+3,...)$. Since 
\[
\sqrt{(a_1+a_2)(b_1+b_2)}\geq \sqrt{a_1 b_1}+\sqrt{a_2 b_2}
\]
for any $a_1,a_2,b_1,b_2\geq 0$, we can easily show that
\begin{align*}
\min_{\overline{F}}  g_{J}(\overline{F})&=\left(\sum_{n=0}^{l-1}\sqrt{\beta_n(J_{*}^{L})t_{n}(J_{*}^{L})}+\sqrt{(\beta_l(J_{*}^{L})+\beta_{l+1}(J_{*}^{L}))(t_{l}(J_{*}^{L})+t_{l+1}(J_{*}^{L}))}\right.\\
&\quad+\left.\sum_{n=l+2}^{\infty}\sqrt{\beta_n(J_{*}^{L})t_{n}(J_{*}^{L})}\right)^2\geq\min_{\overline{F}}  g_{J_{*}^L}(\overline{F}).
\end{align*}
The inequality above can be interpreted as when we combine the $l$-th and $(l+1)$-th terms of $J_{*}^{L}$, the value of $\min_{\overline{F}}  g_{J}(\overline{F})$ increases. 

More generally, for any $k_{0},k_{1},...,k_{M}\in \mathbb{N}^+$, let
\[
J_{1}=(L_{0},L_{1},...,L_{l},L_{l}+k_{0},...,L_{l+M}+k_{M},L_{l+M}+k_{M}+1,L_{l+M}+k_{M}+2,...),
\]
and
\[
J_{2}=(L_{0},L_{1},...,L_{l},L_{l}+k_{0},...,L_{l+M}+k_{M},L_{l+M}+k_{M}+2,L_{l+M}+k_{M}+3,...),
\]
where $J_{2}$ can be regarded as combining the $(l+M)$-th and $(l+M+1)$-th terms of $J_{1}$. We can show analogously that 
\[
\min_{\overline{F}}  g_{J_{2}}(\overline{F})\geq\min_{\overline{F}}  g_{J_{1}}(\overline{F}).
\]
Note that an arbitrary $J\in \mathcal{J}^{(L_{1},L_{2},...,L_{l})}$ can be obtained from $J_{*}^{L}$, if we combine the relevant terms analogously many times (up to countably infinite times) and each time we combine them, the value of $\min_{\overline{F}}  g_{J}(\overline{F})$ increases. Therefore, we can conclude that $\min_{\overline{F}}  g_{J}(\overline{F})\geq \min_{\overline{F}}  g_{J_{*}^L}(\overline{F})$ for all $J\in \mathcal{J}^{(L_{1},L_{2},...,L_{l})}$ and the proof is complete.
\end{proof}

\begin{remark}
Lemma \ref{lemma1}, together with equation (\ref{solutn}), indicates that the tail of the optimal distribution $F_{n}^*$ is proportional to $\sqrt{\frac{\beta_{n}}{t_{n}}}$ for sufficiently large $n$. Since $\beta_{n}=O(2^{-2pn})$ and $t_{n}=O(2^{n})$, we have the optimal distribution $F_{n}^*=O(2^{-(2p+1)n/2})$. In particular, when $p=1$, we obtain $F_{n}^*=O(2^{-3n/2})$.
\end{remark}

The next question is how to specify the optimal distribution of $F_{n}$ given the optimal `$m$-truncated' distribution of $F_{n}$. The answer can be found at Theorem \ref{thm4.1}.

\begin{thm} \label{thm4.1}
Let $F^*$ be the solution of Problem (\ref{Problem3}) with an infinite horizon. Let $F^*(m)$ be the solution of
\begin{equation}
\begin{aligned} \label{optm}
\min\limits_{F}\quad& g(F):=\left(\sum_{n=0}^{m}\frac{\beta_n}{F_n}\right)\left(\sum_{n=0}^{m}t_n F_n\right)\\
s.t.\quad & F_i\geq F_{i+1},\forall i\geq0\\
& F_i>0,\forall i\geq0\\
& F_0=1.
\end{aligned}
\end{equation}
Suppose that Assumption \ref{aspt1} is satisfied and $F_{m-1}^*(m)\neq F_{m}^*(m)$. Then we have
\[
\begin{aligned}
&F_n^{*}=F_n^{*}(m),\quad n=0,1,2,...,m\\
&F_{n+1}^{*}=F_{n}^{*}\sqrt{\frac{\beta_{n+1}}{2\beta_n}},\quad n=m,m+1,...
\end{aligned}
\]
\end{thm}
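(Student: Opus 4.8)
\emph{Sketch of proof.} The plan is to exhibit the optimal partition for Problem~(\ref{Problem3}) explicitly and then read off $F^*$ from~(\ref{solutn}). Recall that solving~(\ref{Problem3}) is the same as finding the optimal $J^*\in\mathcal{J}$, and that the truncated problem~(\ref{optm}) is governed the same way: its solution $F^*(m)$ is constant on the blocks of a partition $J^*(m)$ of $\{0,1,\dots,m\}$ with strictly decreasing block ratios. The hypothesis $F^*_{m-1}(m)\neq F^*_m(m)$ says exactly that $m-1$ and $m$ lie in different blocks of $J^*(m)$, so $\{m\}$ is its last block — a singleton, with value $\sqrt{\beta_m/t_m}$ up to normalization — and the block $[\hat a,m-1]$ preceding it satisfies $\sum_{k=\hat a}^{m-1}\beta_k / \sum_{k=\hat a}^{m-1}t_k > \beta_m/t_m$.

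Let $\hat J$ be the partition of $\{0,1,2,\dots\}$ whose blocks in $\{0,\dots,m\}$ are those of $J^*(m)$ and which is all singletons from $m$ onward; in the notation of the paper $\hat J=J_*^{L}$ with $L=(L_1,\dots,L_{r-1})$ the breakpoints of $J^*(m)$ and $L_{r-1}=m$. The first step is to check $\hat J\in\mathcal{J}$. The block ratios inside $\{0,\dots,m\}$ are strictly decreasing because $J^*(m)$ is feasible; at the junction one needs $\beta_m/t_m>\beta_{m+1}/t_{m+1}$, equivalently $\beta_{m+1}/\beta_m<t_{m+1}/t_m=2$, which holds by Assumption~\ref{aspt1} since $\beta_m/\beta_{m+1}>4^p-\epsilon>2-1=1$ (using $p>\tfrac12$ and $\epsilon<1$); the same computation gives $\beta_n/t_n>\beta_{n+1}/t_{n+1}$ for every $n\geq m$. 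In particular $\hat J\in\mathcal{J}^{(L_1,\dots,L_{r-1})}$, so by Lemma~\ref{lemma1} and Proposition~\ref{pro1} we already know $\hat J$ is optimal among all partitions in its own class.

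The crux is to promote this to $\hat J=J^*$. By the reasoning behind Lemma~\ref{lemma1} together with~(\ref{solutn}), every optimal $J^*$ has the form (a finite head of blocks, with last breakpoint $\geq m$) followed by singletons, so it suffices to show that the head of $J^*$ is the head of $J^*(m)$ and its last breakpoint is $m$. For a head ending at $m$ this is immediate from the optimality of $J^*(m)$ together with the hypothesis that $\{m\}$ is already detached there; the difficulty is to exclude heads ending at some $Q>m$, i.e.\ to exclude the possibility that the true optimum merges indices below $m$ — where $\beta_k$ is not assumed monotone — with $m$ and larger indices, and this is precisely where the hypothesis $F^*_{m-1}(m)\neq F^*_m(m)$ is needed, since it records that the truncated optimizer did not absorb $m$ into a block with earlier indices. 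I would finish in one of two ways. (i) Use the pool-adjacent-violators characterization behind Proposition~\ref{cui}, extended to the infinite horizon: a feasible partition is optimal iff its block ratios strictly decrease and every block is indecomposable (no split $[L_i,j]\mid[j+1,L_{i+1}-1]$ has $\sum_{k=L_i}^{j}\beta_k/\sum_{k=L_i}^{j}t_k>\sum_{k=j+1}^{L_{i+1}-1}\beta_k/\sum_{k=j+1}^{L_{i+1}-1}t_k$); then check $\hat J$ satisfies both, the decreasing property being Step~1 and indecomposability being a local property inherited from $J^*(m)$ on $\{0,\dots,m\}$ and vacuous for singletons. (ii) Alternatively, prove by induction on $Q\geq m$ that the $Q$-truncated optimum is $(L_0,\dots,L_{r-1}=m,m+1,\dots,Q)$ — the base case $Q=m$ is the hypothesis, and the step $Q\to Q+1$ holds because appending the singleton $\{Q+1\}$, whose optimal value $\sqrt{\beta_{Q+1}/t_{Q+1}}$ is strictly below that of the current last block $\{Q\}$ by Assumption~\ref{aspt1}, violates no monotonicity constraint and is therefore forced by uniqueness of the optimum — and then let $Q\to\infty$, controlling the tails of $\sum\beta_n/F_n$ and $\sum t_nF_n$ via the geometric decay in Assumption~\ref{aspt1}. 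This step, equivalently the assertion that no block of $J^*$ straddles $m$, is the part I expect to be most delicate, since it is exactly where the finite-horizon analysis of Cui et al.\ has to be transferred to the infinite horizon.

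Granting $J^*=\hat J$, the conclusion follows from~(\ref{solutn}). Since $\hat J$ and $J^*(m)$ share the same blocks in $\{0,\dots,m\}$ and the same first block, the formula $F^*_n=\sqrt{(\beta_i(J^*)/t_i(J^*))/(\beta_0(J^*)/t_0(J^*))}$ yields $F^*_n=F^*_n(m)$ for $n=0,1,\dots,m$. For $n\geq m$ the block of $\hat J$ containing $n$ is $\{n\}$, so $F^*_n=\sqrt{(\beta_n/t_n)/(\beta_0(J^*)/t_0(J^*))}$, and with $t_n=2^n$,
\[
\frac{F^*_{n+1}}{F^*_n}=\sqrt{\frac{\beta_{n+1}/t_{n+1}}{\beta_n/t_n}}=\sqrt{\frac{\beta_{n+1}}{\beta_n}\cdot\frac{2^{n}}{2^{n+1}}}=\sqrt{\frac{\beta_{n+1}}{2\beta_n}},
\]
which is the asserted recursion $F^*_{n+1}=F^*_n\sqrt{\beta_{n+1}/(2\beta_n)}$ for $n=m,m+1,\dots$.
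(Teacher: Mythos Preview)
Your overall architecture matches the paper's: define $\hat J$ as the head of $J^*(m)$ followed by singletons, verify $\hat J\in\mathcal J$, use Lemma~\ref{lemma1} to reduce the search to partitions of the form $J_*^L$, and then read off $F^*$ from~(\ref{solutn}). Where you diverge is precisely at the step you flag as delicate---comparing $\hat J$ against an arbitrary $J_*^L$ with head ending at some $L_l\ge m$---and here the paper uses a device you do not mention.

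The paper observes that for any such $J_*^L$ one has the exact identity
\[
\min_{\overline F} g_{J_*^L}(\overline F)=\Bigl(\sum_{n=0}^{L_l-1}\frac{\beta_n}{F_n}+\frac{(1+c)\beta_{L_l}}{F_{L_l}}\Bigr)\Bigl(\sum_{n=0}^{L_l-1}t_nF_n+(1+c)t_{L_l}F_{L_l}\Bigr),
\]
with $c=\sum_{n>L_l}\sqrt{\beta_nt_n}/\sqrt{\beta_{L_l}t_{L_l}}$ depending only on $L_l$ and the tail data. Since $\hat J$ is also singletons from $m\le L_l$ onward, the \emph{same} formula with the \emph{same} $c$ holds for $\hat J$. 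Thus the infinite-horizon comparison collapses to a genuine $(L_l{+}1)$-term truncated problem with $\beta_{L_l},t_{L_l}$ rescaled by $1+c$; because this rescaling leaves the ratio $\beta_{L_l}/t_{L_l}$ unchanged, the optimality of $(L_1^*,\dots,L_k^*{=}m,m{+}1,\dots,L_l)$ for that finite problem follows directly from Proposition~\ref{cui} and the hypothesis $F^*_{m-1}(m)\neq F^*_m(m)$. This yields $\min g_{J_*^L}\ge\min g_{\hat J}$ with no limiting argument at all.

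Your route (ii) correctly shows, via the incremental structure of the Cui et al.\ algorithm, that the $Q$-truncated optima stabilize to $\hat J$'s blocks. But ``let $Q\to\infty$'' is not free: you must still argue that the infinite-horizon optimum---already known to be some $J_*^{L'}$---coincides with the limit of truncated optima, and that is exactly what the $(1+c)$ reduction supplies (it converts the infinite comparison at the fixed level $L'_{l'}$ into a finite one where your induction applies). Route (i) would also work, but extending the pool-adjacent-violators ``indecomposability'' characterization to the infinite horizon is itself a lemma you would need to prove. In short, your sketch is sound up to the crux, but the missing ingredient is the tail-absorption trick that turns the infinite comparison into a finite one; once you have it, both of your proposed routes become redundant.
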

\begin{proof}
Let $J^{*}=(L_{1}^{*},...,L_{k}^{*},L_{k}^{*}+1,L_{k}^{*}+2,...)$, where $(L_{1}^{*},...,L_{k}^{*})$ are optimal for Problem (\ref{optm}). The condition $F_{m-1}^*(m)\neq F_{m}^*(m)$ implies that $L_{k}^{*}=m$ and $\frac{\beta_{L_{k-1}}}{t_{L_{k-1}}}>\frac{\beta_m}{t_m}$. We shall prove that $J^{*}$ is also optimal in the infinite sense. 

Assumption \ref{aspt1} implies that, for any $i\geq m$, we have $\beta_i >\beta_{i+1}>...>\beta_{i+n}$ and $t_i<t_{i+1}<...<t_{i+n}$, where $n\in \mathbb{N}^{+}$, so it holds that
\[
\frac{\beta_i}{t_i}>\frac{\sum_{j=i}^{i+n}\beta_{j}}{\sum_{j=i}^{i+n}t_{j}}>\frac{\beta_{i+n}}{t_{i+n}}.
\]
Thus, for any $J=(L_i,i\geq 0)\in \mathcal{J}$, there exists $l\in \mathbb{N}^{+}$, such that $L_{l}\geq m$ and $\frac{\beta_{l-1}(J)}{t_{l-1}(J)}>\frac{\beta_{L_{l}}}{t_{L_{l}}}$, i.e., $J\in\mathcal{J}^{(L_{1},L_{2},...,L_{l})}$.  Let $J_{*}^{L}=(L_{1}, L_{2},...,L_{l},L_{l}+1,L_{l}+2,...)$. It follows by Proposition \ref{pro1} and Lemma \ref{lemma1} that $J_{*}^{L}\in \mathcal{J}^{(L_{1},L_{2},...,L_{l})}$ and 
\begin{equation}
\min_{\overline{F}}  g_{J}(\overline{F})\geq \min_{\overline{F}}  g_{J_{*}^L}(\overline{F}).\label{eq1}
\end{equation}
Hence, it suffices to compare $\min_{\overline{F}}  g_{J_{*}^L}(\overline{F})$ and $\min_{\overline{F}}  g_{J^{*}}(\overline{F})$.

Recall that for the optimization problem
\[
\min_{\overline{F}}  g_{J_{*}^L}(\overline{F})=\left(\sum_{n=0}^{\infty}\frac{\beta_n (J_{*}^L)}{\overline{F}_{n}}\right)\left(\sum_{n=0}^{\infty}t_n (J_{*}^L)\overline{F}_{n}\right),
\]
the solution is
\[
\overline{F}_{i}(J_{*}^L):=\sqrt{\frac{\beta_i(J_{*}^L)/t_i(J_{*}^L)}{\beta_0(J_{*}^L)/t_0(J_{*}^L)}}.
\]
Since $J_{*}^{L}=(L_{1}, L_{2},...,L_{l},L_{l}+1,L_{l}+2,...)$, we have the recursion formula
\[
\overline{F}_{n}(J_{*}^L)=\overline{F}_{l}(J_{*}^L)\sqrt{\frac{\beta_{n}t_{L_{l}}}{t_{n}\beta_{L_{l}}}}
\]
for any $n>l$. Let 
\[
F_{L_i}(J_{*}^L)=F_{L_{i}+1}(J_{*}^L)=...=F_{L_{i+1}-1}(J_{*}^L)=\overline{F}_{l}(J_{*}^L), \quad \forall i\geq 0,
\]
corresponding to $J_{*}^L$, and it follows that
\[
F_{n}(J_{*}^L)=F_{L_i}(J_{*}^L)\sqrt{\frac{\beta_{n}t_{L_{l}}}{t_{n}\beta_{L_{l}}}}
\]
for any $n>L_i$. Therefore, we have
\begin{align}
&\min_{\overline{F}}  g_{J_{*}^L}(\overline{F})\nonumber\\
&=\left(\sum_{n=0}^{L_{l}}\frac{\beta_n}{F_n(J_{*}^L)}+\sum_{n=L_{l}+1}^{\infty}\frac{\beta_n}{F_{L_{l}}(J_{*}^L)\sqrt{\frac{\beta_{n}t_{L_{l}}}{t_{n}\beta_{L_{l}}}}}\right)\nonumber\\
&\quad \times \left(\sum_{n=0}^{L_{l}}t_{n}F_n(J_{*}^L)+\sum_{n=L_{l}+1}^{\infty}t_{n}F_{L_{l}}(J_{*}^L)\sqrt{\frac{\beta_{n}t_{L_{l}}}{t_{n}\beta_{L_{l}}}}\right)\nonumber\\
&=\left(\sum_{n=0}^{L_{l}-1}\frac{\beta_n}{F_n(J_{*}^L)}+\frac{(1+c)\beta_{L_{l}}}{F_{L_{l}}(J_{*}^L)}\right)\left(\sum_{n=0}^{L_{l}-1}t_{n}F_n(J_{*}^L)+(1+c)t_{L_{l}}F_{L_{i}}(J_{*}^L)\right),\nonumber
\end{align}
where 
\[
c=\frac{\sum_{n=L_{l}+1}^{\infty}\sqrt{\beta_{n}t_{n}}}{\sqrt{\beta_{L_{l}}t_{L_{l}}}}<\infty.
\]
For $J^{*}=(L_{1}^{*},...,L_{k}^{*},L_{k}^{*}+1,L_{k}^{*}+2,...)$, we get analogously that
\begin{align*}
&\min_{\overline{F}}  g_{J^{*}}(\bar{F})\\
&=\left(\sum_{n=0}^{L_{l}-1}\frac{\beta_n}{F_n(J^{*})}+\frac{(1+c)\beta_{L_{l}}}{F_{L_{l}}(J^{*})}\right)\left(\sum_{n=0}^{L_{l}-1}t_{n}F_n(J^{*})+(1+c)t_{L_{l}}F_{L_{i}}(J^{*})\right).
\end{align*}

Since $(L_{1}^{*},...,L_{k}^{*})$ is an optimal sequence for Problem (\ref{optm}) and $F_{m-1}^*(m)\neq F_{m}^*(m)$, it holds that
\[
\frac{\beta_{k-1}(J^{*})}{t_{k-1}(J^{*})}>\frac{\beta_{m}}{t_{m}}>...>\frac{\beta_{L_{l}}}{t_{L_{l}}}=\frac{(1+c)\beta_{L_{l}}}{(1+c)t_{L_{l}}}.
\]
Then, it follows from Proposition \ref{cui} that $(L_{1}^{*},...,L_{k}^{*},L_{k}^{*}+1,...,L_{l})$ is an optimal sequence for the problem
\[
\min_{F}\left(\sum_{n=0}^{L_{l}-1}\frac{\beta_n}{F_n}+\frac{(1+c)\beta_{L_{l}}}{F_{L_{l}}}\right)\left(\sum_{n=0}^{L_{l}-1}t_{n}F_n+(1+c)t_{L_{l}}F_{L_{l}}\right). 
\]
Hence, we have
\begin{equation}
\min_{\overline{F}}  g_{J_{*}^L}(\overline{F})\geq \min_{\overline{F}}  g_{J^{*}}(\overline{F}).\label{eq2}
\end{equation}
Therefore, combining (\ref{eq1}) and (\ref{eq2}), we conclude that for any $J\in \mathcal{J}$, there is a corresponding $J_{*}^L$, such that
\[
\min_{\overline{F}}  g_{J}(\overline{F})\geq\min_{\overline{F}}  g_{J_{*}^L}(\overline{F})\geq \min_{\overline{F}}  g_{J^{*}}(\overline{F}).
\]
This indicates that $J^*=(L_{1}^{*},...,L_{k}^{*},L_{k}^{*}+1,L_{k}^{*}+2,...)$ is optimal for Problem (\ref{Problem3}) with an infinite horizon. As $(L_{1}^{*},...,L_{k}^{*})$ is optimal for the $m$-truncated problem (\ref{optm}), we obtain that $F_n^{*}=F_n^{*}(m)$ for $n=0,1,2,...,m$ and $F_{n+1}^{*}=F_{n}^{*}\sqrt{\frac{\beta_{n+1}}{2\beta_n}}$ for $n=m+1,m+2,...$, which completes the proof.
\end{proof}

Theorem \ref{thm4.1} means that we can utilize an $m$-truncated algorithm to find the optimal distribution $F_{n}^*$, $n=0,1,..,m$ and then calculate $F_{n}^*$, $n=m+1,m+2,...$ recursively using 
\begin{equation}
F_{n+1}^{*}=F_{n}^{*}\sqrt{\frac{\beta_{n+1}}{2\beta_n}}\approx 2^{\frac{-2p-1}{2}}F_{n}^{*}.\label{Fnapprox}
\end{equation}
Here, the value of $m$ can be determined adaptively starting with $m=1$, until the conditions $4^p-\epsilon \leqslant \beta_m/\beta_{m+1}\leqslant 4^p+\epsilon$ and $F_{m-1}^*(m)\neq F_{m}^*(m)$ are satisfied. In our setting, we have $\beta=O(2^{-2pn})$, $p>1/2$, so $\beta_n/\beta_{n+1}\approx 4^p$ holds for sufficiently large $n$, which implies that $4^p-\epsilon \leqslant \beta_n/\beta_{n+1}\leqslant 4^p+\epsilon$. Here, $\epsilon$ controls the accuracy of the approximation (\ref{Fnapprox}). Clearly, the smaller $\epsilon$ is, the more accurate the approximation is. However, if $\epsilon$ is too small, then a large value of $m$ is needed to satisfy the condition $4^p-\epsilon \leqslant \beta_n/\beta_{n+1}\leqslant 4^p+\epsilon$, $n\geq m$, which increases the computational cost of prior estimation. In practical applications, we find that $\epsilon=0.5$ is typically satisfactory for a time-discrete scheme with $p=1$. The other condition $F_{m-1}^*(m)\neq F_{m}^*(m)$ is also important, and it implies that
\[
\frac{\beta_{k-1}(J^*)}{t_{k-1}(J^*)}>\frac{\beta_{k}(J^*)}{t_{k}(J^*)}=\frac{\beta_m}{t_{m}}
\]
where $J^{*}=(L_{1}^*,...,L_{k}^*)$ is optimal for the $m$-truncated distribution. This condition is easy to satisfy due to the lemma below:

\begin{lemma}\label{lemma3}
If Assumption \ref{aspt1} is satisfied and $F_{m-1}^*(m)=F_{m}^*(m)$, then we have
\[
F_{m}^*(m+1)\neq F_{m+1}^*(m+1),
\]
where $F^*(m+1)$ is the optimal distribution for the $(m+1)$-truncated problem. 
\end{lemma}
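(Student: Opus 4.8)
The plan is to translate both the hypothesis and the desired conclusion into statements about the block structure of the optimal solutions furnished by Propositions~\ref{Dual} and~\ref{cui}, and then to show that the single extra term $(\beta_{m+1},t_{m+1})$ which enters when one passes from the $m$-truncated problem~\eqref{optm} to its $(m+1)$-truncated counterpart is too small to be pooled with the last block. Recall from Propositions~\ref{Dual} and~\ref{cui} that $F^{*}(m)$ is constant on the blocks of a partition of $\{0,1,\dots,m\}$, the common value on a block $B$ being proportional to $\sqrt{\rho(B)}$ with $\rho(B):=(\sum_{n\in B}\beta_{n})/(\sum_{n\in B}t_{n})$, and that these block ratios $\rho(B)$ strictly decrease from one block to the next; in particular the last block, say $\{a,a+1,\dots,m\}$, carries the smallest ratio $R:=(\sum_{n=a}^{m}\beta_{n})/(\sum_{n=a}^{m}t_{n})$. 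The hypothesis $F_{m-1}^{*}(m)=F_{m}^{*}(m)$ says precisely that $m-1$ and $m$ lie in the same block, i.e.\ $a\le m-1$, while the conclusion $F_{m}^{*}(m+1)\neq F_{m+1}^{*}(m+1)$ is the assertion that $m$ and $m+1$ lie in different blocks of the $(m+1)$-truncated solution.

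The computational core of the argument is the elementary observation that, for every $j$ with $0\le j\le m$,
\[
\frac{\sum_{n=j}^{m}\beta_{n}}{\sum_{n=j}^{m}t_{n}}>\frac{\beta_{m+1}}{t_{m+1}} .
\]
Indeed $\sum_{n=j}^{m}\beta_{n}\ge\beta_{m}>\beta_{m+1}$, the last inequality holding by Assumption~\ref{aspt1} (which gives $\beta_{m}/\beta_{m+1}>4^{p}-\epsilon>1$ since $p>1/2$ and $\epsilon<1$), while $\sum_{n=j}^{m}t_{n}=\sum_{n=j}^{m}2^{n}=2^{m+1}-2^{j}<2^{m+1}=t_{m+1}$; since all the quantities involved are positive, these two inequalities combine to give the display. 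Taking $j=a$ yields $R>\beta_{m+1}/t_{m+1}$, so the ratio of the prospective new block $\{m+1\}$ lies strictly below the smallest block ratio occurring in $F^{*}(m)$.

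To finish I would invoke the recursive nature of the construction of Cui et.al.~\cite{CLZZ}: their $O(m)$ procedure assembles the optimal partition by a single forward sweep through $(\beta_{0},t_{0}),(\beta_{1},t_{1}),\dots$, maintaining a stack of blocks with strictly decreasing ratios and pooling adjacent blocks (as licensed by Proposition~\ref{cui}) exactly when a freshly appended singleton would break this monotonicity. The stack present after the first $m+1$ pairs have been processed is precisely $F^{*}(m)$, and the $(m+1)$-truncated solution is obtained by feeding in the single further pair $(\beta_{m+1},t_{m+1})$. By the preceding paragraph the new singleton $\{m+1\}$ has ratio $\beta_{m+1}/t_{m+1}<R$, the ratio of the current top block $\{a,\dots,m\}$; hence monotonicity is not violated, no pooling occurs, and $\{m+1\}$ remains a block on its own. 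Consequently $m$ and $m+1$ lie in distinct adjacent blocks of the $(m+1)$-truncated solution --- on which $F^{*}(m+1)$ necessarily takes different values --- and therefore $F_{m}^{*}(m+1)\neq F_{m+1}^{*}(m+1)$.

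The point that needs the most care is the assertion that appending one pair alters $F^{*}(m)$ only through a pooling cascade initiated at the new index $m+1$; in a general isotonic-type problem, adding a point at the right end can in principle rearrange earlier blocks. Here that cannot happen, because the estimate above prevents even the first merge at $m+1$, so the sweep simply terminates there. A reader who prefers to avoid the algorithm can argue statically by contradiction instead: if $m$ and $m+1$ were both contained in a block $\{b,\dots,m+1\}$ of the optimal $(m+1)$-truncated partition, then splitting that block at $m+1$ produces a left sub-block of ratio $(\sum_{n=b}^{m}\beta_{n})/(\sum_{n=b}^{m}t_{n})>\beta_{m+1}/t_{m+1}$, whence the ``do not pool'' direction of Proposition~\ref{cui} forces this pooling to strictly increase the objective --- contradicting optimality, once one has also verified, using Assumption~\ref{aspt1} again, that the split is consistent with the block immediately preceding $\{b,\dots,m+1\}$.
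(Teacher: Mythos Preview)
Your proposal is correct and follows essentially the same route as the paper's proof: both arguments identify the last block $\{a,\dots,m\}$ of $F^{*}(m)$, bound its ratio from below via $\sum_{n=a}^{m}\beta_{n}\ge\beta_{m}>\beta_{m+1}$ and $\sum_{n=a}^{m}2^{n}<2^{m+1}=t_{m+1}$, and conclude that the new singleton $\{m+1\}$ is not pooled. The paper simply invokes Proposition~\ref{cui} at the final step, whereas you spell out the sweep/PAVA justification and an alternative static contradiction argument; these additions are sound but not essential to the core idea.
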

\begin{proof}
Let $J^{*}=(L_{1}^*,...,L_{k}^*)$ be optimal for the $m$-truncated distribution, where $L_{k}^*<m$ due to $F_{m-1}^*(m)=F_{m}^*(m)$. Since $\beta_{k}(J^*)=\sum_{i=L_{k}^*}^{m}\beta_i>\beta_m$ and 
\[
t_{k}(J^*)=\sum_{i=L_{k}^*}^{m}t_i=\sum_{i=L_{k}^*}^{m}2^{-(m-i)}t_m<2t_m,
\]
we have
\[
\frac{\beta_{k}(J^*)}{t_{k}(J^*)}>\frac{\beta_m}{2t_m}>\frac{\beta_{m+1}}{t_{m+1}},
\]
where $\beta_{m}>\beta_{m+1}$ due to Assumption \ref{aspt1}. Thus, it follows from Proposition \ref{cui} that $(L_{1}^{*},...,L_{k}^{*},m+1)$ is optimal for the $(m+1)$-truncated distribution with $F_{m}^*(m+1)\neq F_{m+1}^*(m+1)$.
\end{proof}

\begin{remark}
Lemma \ref{lemma3} implies that the condition $F_{m-1}^*(m)\neq F_{m}^*(m)$ is much easier to satisfy than the condition $4^p-\epsilon \leqslant \beta_n/\beta_{n+1}\leqslant 4^p+\epsilon$, $n\geq m$. In most cases we encounter, when the latter is satisfied, the former is also satisfied. 
\end{remark}

Based on the above discussion, we are now ready to provide an adaptive algorithm (Algorithm \ref{algo}) to calculate the optimal distribution $F^*$ with an infinite horizon, which is an extension of the algorithm in Cui et.al.\cite{CLZZ} with an adaptive value of $m$.  In Algorithm \ref{algo}, $BetaEstn(m)$ is a self-defined function to estimate $\beta_m$. We need to estimate only $\beta_{n}$, $n=0,1,...,m,m+1$ rather than all $\beta_{n}$, saving a large amount of computational time. Here, the output $P_n$ is the optimal distribution $F^*_{n}$ for $n=0,1,2,...,m$. The condition $L_k=R_k$ with $k=Index$ is equivalent to $F_{m-1}^*(m)\neq F_{m}^*(m)$. Note that for the original algorithm in Cui et.al.\cite{CLZZ}, to calculate the $m$-truncated optimal distribution, one needs to compute the $2$-nd, $3$-rd,...,$(m-1)$-th truncated optimal distributions in order. Thus, for our adaptive algorithm, the computational cost of calculating the optimal distribution is exactly the same as that of calculating the $m$-truncated optimal distribution with the same value of $m$.

\begin{algorithm}
\caption{Adaptive algorithm for optimal distributions}
\label{algo}
\begin{algorithmic}

\State $k\leftarrow 0, Index\leftarrow -1$
\For{$m=0:10$}
   \State $L_m\leftarrow m,R_m\leftarrow m,t_m \leftarrow 2^{m}$
   \State $V_m\leftarrow(\beta_m/t_m)^{1/2}$
   \If{$m=0$}
       \State $\beta_{m}=BetaEstn(m)$, $\beta_{m+1}=BetaEstn(m+1)$
   \Else
       \State $\beta_{m+1}=BetaEstn(m+1)$
   \EndIf  
   \State $Index\leftarrow Index+1$
   \While{$k<Index$}
      \If{$V_k\leqslant V_{k+1}$}
         \State $R_k\leftarrow R_{k+1}$
         \State $V_k\leftarrow [(\sum_{n=L_k}^{R_k}\beta_n)/(\sum_{n=L_k}^{R_k}t_n)]^{1/2}$
         \State $L_{k+1},R_{k+1},V_{k+1}\leftarrow \emptyset$
         
         \If{$k\neq0$}
            \State $k\leftarrow k-1$
         \EndIf
      \Else
         
         \State $k\leftarrow k+1$
      \EndIf
   \EndWhile
   \If{$m>0$ and $\mid \beta_m/\beta_{m+1}-4^p\mid<\epsilon$ and $L_k=R_k$}
      \State break
   \EndIf
\EndFor    
\State $F_n\leftarrow V_k,n=L_k,...,R_k,k=0,1,...,Index$
\State $P_n=F_n/F_0$, $n=0,1,...,m$\\
\Return $m$, $P_n$, $n=0,1,...,m$

\end{algorithmic}
\end{algorithm}

\section{Numerical experiment}

In this section, we evaluate the efficiency of our adaptive algorithm. The algorithm is combined with several time-discrete schemes applied to SDE models, including the Black-Scholes model, the Heston model and the Heston-Hull-White model. These models are widely used in finance. 

We consider two unbiased estimators to approximate the expectation $E(Y)$: the coupled sum estimator
\[
Z=\sum_{n=0}^{N}\frac{\Delta_n}{P(N\geq n)},
\]
where $\Delta_n=Y_n-Y_{n-1}$, and the independent sum estimator
\[
\widetilde{Z}=\sum_{n=0}^{N}\frac{\widetilde{\Delta}_n}{P(N\geq n)},
\]
where $\widetilde{\Delta}_n=\widetilde{Y}_n-\widetilde{Y}_{n-1}$. As discussed, the efficiency of the above two estimators is measured by the product $E(\tau) \times var(Z)$, where $E(\tau)$ is the average computational time and $var(Z)$ is the variance of the unbiased estimator $Z$. Furthermore, this product relies on the probabilities $F_n=P(N\geq n)$. We compare three choices of $F_n$, denoted by $Dist1$, $Dist2$ and $Dist3$, respectively. The first choice is
\[
Dist1:=(2^{-n(2p+1)/2}: n\geq 0),
\]
which represents the subcanonical distribution of $N$ introduced in Section 4 of Rhee and Glynn \cite{RG}. The second choice is
\[
Dist2:=(F_n^{*}(m): m=7, n\geq 0),
\]
where $F_n^{*}(m)$ is the $m$-truncated optimal distribution of $N$ with $m=7$, calculated through the original algorithm in Cui et.al \cite{CLZZ} and the tail distribution follows $F_{n+1}^{*}(m)=F_{n}^{*}(m)\times 2^{-(2p+1)/2}$, $n=m,m+1,..$. The third choice is
\[
Dist3:=(F_0^{*},F_1^{*},...,F_m^{*},F_m^{*}\times 2^{-(2p+1)/2},...),
\]
where $F_n^{*}$ is the optimal distribution with an infinite horizon, calculated through our algorithm with an adaptive $m$, and we set $\epsilon=0.5$.

Note that the calculation of $Dist2$ and $Dist3$ for the unbiased estimators $Z$ and $\widetilde{Z}$ requires prior estimation of the first $m$ terms of 
\[
v_n=E[(Y_{n-1}-Y)^2]-E[(Y_n-Y)^2]
\]
and 
\[
\widetilde{v}_n=var(Y_{n}-Y_{n-1})+(E(Y)-E(Y_{n-1}))^2-(E(Y)-E(Y_{n}))^2,
\] 
respectively, where $Y_n$ is the approximation of $Y$ based on a time-discrete scheme with step size $T/n$. Since it is generally difficult to generate $Y$ exactly, we approximate $Y\approx Y_{10}$. For estimation of the quantities $v_{n}$, it typically suffices to obtain a highly accurate estimation based on $5\times 10^5$ samples of $(Y_{n-1}, Y_n, Y)$; for estimation of the quantities $\widetilde{v}_{n}$, we generate at least $10^6$ samples of $(Y_{n-1},Y_n, Y)$. In our numerical test, we let $Y=e^{-r}(S(1)-1,0)^+$, which corresponds to the payoff of the standard European call option. Here, $S$ is the path of the SDE model and $r$ is the interest rate.

Our aim is to investigate which choice of distribution $F_n$ is most efficient for the unbiased estimators  and has the least computational time in prior estimation. All numerical experiments are performed in MATLAB.

\subsection{Black-Scholes model}

The Black-Scholes model (Black and Scholes \cite{BS}) is given by
\[
dS(t)=r S(t)dt+\sigma S(t)dB(t),
\]
where $r,\sigma\in \mathbb{R}^{+}$ and $B=(B(t):t\geq0)$ is the standard Brownian motion. We consider the Milstein scheme to approximate the path:
\[
\begin{aligned}
S_h((j+1)h)=&S_h(jh)+r S_h(jh)h+\sigma S_h(jh)(B((j+1)h)-B(jh))\\
&+\frac{1}{2}\sigma^2 S_h(jh)((B((j+1)h)-B(jh))^2-h),
\end{aligned}
\]
where $j=1,2,...,T/h-1$, and $S_{h}(0)=S(0)$. 

Table \ref{table:1} compares the coupled sum estimator $Z$ from three different distributions $Dist1$, $Dist2$ and $Dist3$ of $N$. Here, $Var$ represents the variance of $\frac{1}{n}\sum_{i=1}^{n}Z_{i}$ with sample size $n=10^6$ and $time$ is the corresponding real computational time in seconds required to compute these quantities.  From Table \ref{table:1}, we observe that $Dist3$ from our adaptive algorithm leads to a smaller value of $Var\times time$ in comparison with $Dist1$, and is thus more computationally efficient. Furthermore, both $Dist2$ and $Dist3$ have a similar value of $Var\times time$, as their distributions are very similar. However, the computational time in prior estimation for $Dist3$ is far less, because its corresponding $m$ is much smaller. Hence, we prefer $Dist3$ to the other distributions. 

Moreover, we see that $\beta_{n}=O(2^{-2pn})$ with $p=1$. The stopping conditions that $F_{m-1}^{*}(m)\neq F_{m}^{*}(m)$ and $3.5<\beta_{m}/\beta_{m+1}<4.5$ are satisfied for a minimum of $m=1$, so the adaptive value of $m$ from $Dist3$ is $1$. For $n>m$, we can check that the condition $3.5<\beta_{n}/\beta_{n+1}<4.5$ is also satisfied; thus it is reasonable to approximate $F_{n+1}^{*}=F_{n}^{*}\sqrt{\frac{\beta_{n+1}}{2\beta_n}}\approx 2^{-3/2}F_{n}^{*}$, which means the above stopping conditions with $m=1$ suffices to maintain the robustness of our adaptive algorithm. For the independent sum estimator $\widetilde{Z}$, analogous conclusions hold, as shown in Table \ref{table:2}.

\begin{table}
\begin{center}
\setlength\tabcolsep{1pt}
\caption{Coupled sum estimator $Z$; Black-Scholes model; $r=0.05$, $\sigma=0.20$, $T=1$, $S(0)=1$; sample size $10^6$.}
\label{table:1}
\begin{tabular}{|c|c|c|c|c|c|c|c|}
\hline
$Z$&$m$&\multicolumn{2}{|c|}{$var$}&\multicolumn{2}{|c|}{$time$}&\multicolumn{2}{|c|}{$var\times time$}\\
\hline
Dist1&$\diagdown$&\multicolumn{2}{|c|}{$2.21\times10^{-8}$}&\multicolumn{2}{|c|}{$26.98$}&\multicolumn{2}{|c|}{$5.98\times10^{-7}$}\\
Dist2&$7$&\multicolumn{2}{|c|}{$2.70\times10^{-8}$}&\multicolumn{2}{|c|}{$11.91$}&\multicolumn{2}{|c|}{$3.21\times10^{-7}$}\\
Dist3&$1$&\multicolumn{2}{|c|}{$2.68\times10^{-8}$}&\multicolumn{2}{|c|}{$11.92$}&\multicolumn{2}{|c|}{$3.19\times10^{-7}$}\\
\hline n&0&1&2&3&4&5&6\\
\hline  {$\beta_n$}&0.0032&{$5.51\times10^{-5}$}&{$1.48\times10^{-5}$}&{$3.74\times10^{-6}$}&{$1.08\times10^{-6}$}&{$2.49\times10^{-7}$}&{$5.73\times10^{-8}$}\\
\hline
Dist1&1.000&0.3536&0.1250&0.0442&0.0156&0.0055&0.0020\\
Dist2&1.000&0.0293&0.0108&0.0038&0.0015&0.0005&0.0002\\
Dist3&1.000&0.0293&0.0104&0.0037&0.0013&0.0005&0.0002\\
\hline
\end{tabular}
\end{center}
\end{table}

\begin{table}
\begin{center}
\setlength\tabcolsep{1pt}
\caption{Independent sum estimator $\widetilde{Z}$; Black-Scholes model; $r=0.05$, $\sigma=0.20$, $T=1$ $S(0)=1$; sample size $10^6$.}
\label{table:2}
\begin{tabular}{|c|c|c|c|c|c|c|c|}
\hline
$\widetilde{Z}$&$m$&\multicolumn{2}{|c|}{$var$}&\multicolumn{2}{|c|}{$time$}&\multicolumn{2}{|c|}{$var\times time$}\\
\hline
Dist1&$\diagdown$&\multicolumn{2}{|c|}{$1.99\times10^{-8}$}&\multicolumn{2}{|c|}{$24.92$}&\multicolumn{2}{|c|}{$4.96\times10^{-7}$}\\
Dist2&$7$&\multicolumn{2}{|c|}{$2.40\times10^{-8}$}&\multicolumn{2}{|c|}{$2.29$}&\multicolumn{2}{|c|}{$5.51\times10^{-8}$}\\
Dist3&$1$&\multicolumn{2}{|c|}{$2.41\times10^{-8}$}&\multicolumn{2}{|c|}{$2.01$}&\multicolumn{2}{|c|}{$4.84\times10^{-8}$}\\
\hline n&0&1&2&3&4&5&6\\
\hline  {$\beta_n$}&0.00305&{$2.62\times10^{-5}$}&{$7.46\times10^{-6}$}&{$2.12\times10^{-6}$}&{$5.27\times10^{-7}$}&{$1.43\times10^{-7}$}&{$3.43\times10^{-8}$}\\
\hline
Dist1&1.000&0.3536&0.1250&0.0442&0.0156&0.0055&0.0020\\
Dist2&1.000&0.0207&0.0078&0.0029&0.0010&0.0004&0.0001\\
Dist3&1.000&0.0207&0.0073&0.0026&0.0009&0.0003&0.0001\\
\hline
\end{tabular}
\end{center}
\end{table}

The numerical examples from Tables \ref{table:1} and \ref{table:2} do not involve $F_{n}^{*}=F_{n+1}^{*}$ for some $n$, so the value of $m$ required in our adaptive algorithm is quite small. Tables \ref{table:7} and \ref{table:8} present opposite cases, where  $F_{n}^{*}=F_{n+1}^{*}$ can occur. We see that for both unbiased estimators $Z$ and $\widetilde{Z}$, the conclusions are similar to that in the previous numerical studies. Particularly, the computational efficiency of $Dist3$ measured by $var\times time$ is superior to that of $Dist1$. However, in this case, the value of $m$ needed can be large (e.g., $m=5$ in Table \ref{table:8}).

\begin{table}
\begin{center}
\caption{Coupled sum estimator $Z$; Black-Scholes model; $r=0.05$, $\sigma=2$, $T=1$, $S(0)=1$; sample size $10^6$.}
\label{table:7}
\begin{tabular}{|c|c|c|c|c|c|c|c|}
\hline
$Z$&$m$&\multicolumn{2}{|c|}{$var$}&\multicolumn{2}{|c|}{$time$}&\multicolumn{2}{|c|}{$var\times time$}\\
\hline
Dist1&$\diagdown$&\multicolumn{2}{|c|}{$6.13\times10^{-4}$}&\multicolumn{2}{|c|}{$27.28$}&\multicolumn{2}{|c|}{$0.0167$}\\
Dist2&$7$&\multicolumn{2}{|c|}{$1.25\times10^{-4}$}&\multicolumn{2}{|c|}{$88.67$}&\multicolumn{2}{|c|}{$0.0111$}\\
Dist3&$2$&\multicolumn{2}{|c|}{$1.37\times10^{-4}$}&\multicolumn{2}{|c|}{$80.27$}&\multicolumn{2}{|c|}{$0.0110$}\\
\hline n&0&1&2&3&4&5&6\\
\hline  {$\beta_n$}&12.03&{$10.25$}&{$37.99$}&{$8.97$}&{$2.55$}&{$0.71$}&{$0.20$}\\
\hline
Dist1&1.000&0.3536&0.1250&0.0442&0.0156&0.0055&0.0020\\
Dist2&1.000&0.8175&0.8175&0.3053&0.1151&0.0430&0.0162\\
Dist3&1.000&0.8175&0.8175&0.2890&0.1022&0.0361&0.0128\\
\hline
\end{tabular}
\end{center}
\end{table}

\begin{table}
\begin{center}
\caption{Independent sum estimator $\widetilde{Z}$; Black-Scholes model; $r=0.05$, $\sigma=2$, $T=1$ $S(0)=1$; sample size $10^6$.}
\label{table:8}
\begin{tabular}{|c|c|c|c|c|c|c|c|}
\hline
$\widetilde{Z}$&$m$&\multicolumn{2}{|c|}{$var$}&\multicolumn{2}{|c|}{$time$}&\multicolumn{2}{|c|}{$var\times time$}\\
\hline
Dist1&$\diagdown$&\multicolumn{2}{|c|}{$1.68\times10^{-2}$}&\multicolumn{2}{|c|}{$25.05$}&\multicolumn{2}{|c|}{$0.4218$}\\
Dist2&$7$&\multicolumn{2}{|c|}{$4.53\times10^{-4}$}&\multicolumn{2}{|c|}{$400.71$}&\multicolumn{2}{|c|}{$0.1815$}\\
Dist3&$5$&\multicolumn{2}{|c|}{$5.25\times10^{-4}$}&\multicolumn{2}{|c|}{$421.96$}&\multicolumn{2}{|c|}{$0.2215$}\\
\hline n&0&1&2&3&4&5&6\\
\hline  {$\beta_n$}&13.82&{$26.01$}&{$64.98$}&{$87.02$}&{$35.10$}&{$19.69$}&{$5.44$}\\
\hline
Dist1&1.000&0.3536&0.1250&0.0442&0.0156&0.0055&0.0020\\
Dist2&1.000&1.000&1.000&0.8523&0.3823&0.2027&0.0753\\
Dist3&1.000&1.000&1.000&0.8523&0.3823&0.2027&0.0717\\
\hline
\end{tabular}
\end{center}
\end{table}

\subsection{Heston model}

The Heston model (Heston \cite{He}) is given by
\begin{align*}
dS(t)&=r S(t)+\sqrt{V(t)}S(t)dB_1(t)\\
dV(t)&=k(\theta-V(t))dt+\sigma \sqrt{V(t)}dB_2(t)
\end{align*}
where $r, k, \theta, \sigma\in \mathbb{R}^{+}$. Here, $B_1=(B_1(t):t\geq0)$ and $B_2=(B_2(t):t\geq0)$ are independent standard Brownian motions. 

If the model parameters satisfy the Feller boundary condition $2k\theta \geq \sigma^2$, the drift-implicit Milstein scheme ensures that the approximation of $V$ is nonnegative. It can be combined with the antithetic truncated Milstein method in Giles and Szpruch \cite{GS}, leading to the time-discrete scheme as follows:
\begin{align*}
\ln(S_{h}((j+1)h))&=\ln(S_{h}(jh))+(r-\frac{1}{2}V_{h}(jh))h+\sqrt{V_{h}(jh)}\Delta B_{1,j}+\frac{\sigma}{4} \Delta B_{1,j}\Delta B_{2,j}\\
V_{h}((j+1)h)&=V_{h}(jh)+k(\theta-V_{h}((j+1)h))h+\sigma \sqrt{V_{h}(jh)}\Delta B_{2,j}+\frac{\sigma^2}{4}(\Delta B_{2,j}^2-h)
\end{align*}
where $j=0,1,..,T/h$. 

The relevant results are shown in Tables \ref{table:3} and \ref{table:4}. As we can see, for both the coupled estimator $Z$ and the independent sum estimator $\widetilde{Z}$, the values of $var\times time$ from the distributions $Dist2$ and $Dist3$ are similar since they have very similar distributions, and their $var\times time$ are smaller than that from $Dist1$. Furthermore, the computational cost in prior estimation of $Dist3$ is smaller than that of $Dist2$. Thus, we prefer $Dist3$ to $Dist1$ and $Dist2$.

\begin{table}[htbp]
\begin{center}
\setlength\tabcolsep{1pt}
\caption{Coupled sum estimator $Z$; Heston model; $r=0.05$, $\sigma=0.25$, $k=1$, $\theta=0.04$, $T=1$ , $S(0)=1,V(0)=0.04$; sample size $10^6$.}
\label{table:3}
\begin{tabular}{|c|c|c|c|c|c|c|c|}
\hline
$Z$&$m$&\multicolumn{2}{|c|}{$var$}&\multicolumn{2}{|c|}{$time$}&\multicolumn{2}{|c|}{$var\times time$}\\
\hline
Dist1&$\diagdown$&\multicolumn{2}{|c|}{$2.60\times10^{-8}$}&\multicolumn{2}{|c|}{$54.59$}&\multicolumn{2}{|c|}{$1.42\times 10^{-6}$}\\
Dist2&$7$&\multicolumn{2}{|c|}{$4.05\times10^{-8}$}&\multicolumn{2}{|c|}{$28.70$}&\multicolumn{2}{|c|}{$1.16\times 10^{-6}$}\\
Dist3&$1$&\multicolumn{2}{|c|}{$4.48\times10^{-8}$}&\multicolumn{2}{|c|}{$26.90$}&\multicolumn{2}{|c|}{$1.20\times 10^{-6}$}\\
\hline  n&0&1&2&3&4&5&6\\
\hline  {$\beta_n$}&0.0306&{$6.19\times10^{-4}$}&{$1.55\times10^{-4}$}&{$4.07\times10^{-5}$}&{$1.09\times10^{-5}$}&{$2.97\times10^{-6}$}&{$8.23\times10^{-7}$}\\
\hline  
Dist1&1.000&0.3536&0.1250&0.0442&0.0156&0.0055&0.0020\\  Dist2&1.000&0.1006&0.0356&0.0129&0.0047&0.0017&0.0006\\  Dist3&1.000&0.1006&0.0355&0.0126&0.0044&0.0016&0.0006\\
\hline
\end{tabular}
\end{center}
\end{table}

\begin{table}[htbp]
\begin{center}
\setlength\tabcolsep{1pt}
\caption{Independent sum estimator $\widetilde{Z}$; Heston model; $r=0.05$, $\sigma=0.25$, $k=1$, $\theta=0.04$, $S(0)=1,V(0)=0.04$; sample size $10^6$.}
\label{table:4}
\begin{tabular}{|c|c|c|c|c|c|c|c|}
\hline
$\widetilde{Z}$&$m$&\multicolumn{2}{|c|}{$var$}&\multicolumn{2}{|c|}{$time$}&\multicolumn{2}{|c|}{$var\times time$}\\
\hline
Dist1&$\diagdown$&\multicolumn{2}{|c|}{$2.99\times10^{-8}$}&\multicolumn{2}{|c|}{$67.63$}&\multicolumn{2}{|c|}{$2.02\times10^{-6}$}\\
Dist2&$7$&\multicolumn{2}{|c|}{$4.47\times10^{-8}$}&\multicolumn{2}{|c|}{$31.77$}&\multicolumn{2}{|c|}{$1.42\times10^{-6}$}\\
Dist3&$1$&\multicolumn{2}{|c|}{$4.75\times10^{-8}$}&\multicolumn{2}{|c|}{$30.30$}&\multicolumn{2}{|c|}{$1.44\times10^{-6}$}\\
\hline n&0&1&2&3&4&5&6\\
\hline  {$\beta_n$}&0.0367&{$3.15\times10^{-4}$}&{$8.18\times10^{-5}$}&{$2.20\times10^{-5}$}&{$6.19\times10^{-6}$}&{$1.77\times10^{-6}$}&{$5.31\times10^{-7}$}\\
\hline
Dist1&1.000&0.3536&0.1250&0.0442&0.0156&0.0055&0.0020\\
Dist2&1.000&0.0655&0.0236&0.0087&0.0032&0.0012&0.0005\\
Dist3&1.000&0.0655&0.0232&0.0082&0.0029&0.0010&0.0004\\ 
\hline
\end{tabular}
\end{center}
\end{table}

\subsection{Heston-Hull-White model}

The Heston-Hull-White model is given by
\begin{align*}
dS(t)&=r(t) S(t) dt+\sqrt{V(t)}S(t)(\rho dB_{1}(t)+\sqrt{1-\rho^2}dB_{2}(t))\\
dV(t)&=k(\theta-V(t))dt+\sigma \sqrt{V(t)}dB_{1}(t)\\
dr(t)&=\alpha(\beta-r(t))dt+\gamma dB_{3}(t)
\end{align*}
where $B_1$, $B_2$ and $B_3$ are mutually independent Brownian motions. Here, the parameters $k,\theta,\sigma, \alpha, \beta, \gamma>0$ and $\rho\in [-1,1]$. This model is an extension of the Heston model with an interest rate following the Hull-White model (Hull and White \cite{HW}); see Grzelak and Oosterlee \cite{GO} for more detail. We consider a semi-exact scheme developed in Zheng and Pan \cite{ZP}. Let $X(T)=\ln(S(T))$ and the solution can be written as
\begin{align*}
X(T)=&X(0)+\int_{0}^{T}r(t) dt+(\frac{\rho k}{\sigma}-\frac{1}{2})\int_{0}^{T}V(t) dt\\
&+\frac{\rho}{\sigma}(V(T)-V(0)-k\theta T)+\sqrt{1-\rho^2}\sqrt{\int_{0}^{T}V(t) dt}N
\end{align*}
where $N$ is a standard normal random variable that is independent of $V$ and $r$. Here, $V(t)$ follows a noncentral chi-squared distribution as the Heston model and $r(t)$ follows a normal distribution (see Glasserman \cite{G}). Then, it is convenient to approximate
\[
\int_{0}^{T}r_t dt\approx \sum_{i=0}^{T/h-1}r_{ih}h, \int_{0}^{T}V_t dt\approx \sum_{i=0}^{T/h-1}V_{ih}h
\]
where $h$ is the time step size. Moreover, the price of the standard European call option can be expressed as
\[
E\left[e^{-\int_{0}^{T}r(t)dt}\max(S(T)-1,0)\right]
\]
where the integral can be approximated analogously. For this scheme, the theoretical convergence rate $p=1$; see Zheng and Pan \cite{ZP}. 

The results of the numerical experiments are shown in Tables \ref{table:5} and \ref{table:6}. For both  estimators $Z$ and $\widetilde{Z}$, the distribution $Dist3$ is the most satisfactory. Moreover, we find that it holds that $3.5<\beta_{n}/\beta_{n+1}<4.5$ for all $n=m,m+1,...$, demonstrating again the robustness of the adaptive algorithm.

\begin{table}[htbp]
\begin{center}
\setlength\tabcolsep{1pt}
\caption{Coupled sum estimator; Heston-Hull-White; $k=3,\theta=0.04,\sigma=0.25,\alpha=1,\beta=0.06,\gamma=0.5,S(0)=1,V(0)=0.04,r(0)=0.05$; sample size $10^6$.}
\label{table:5}
\begin{tabular}{|c|c|c|c|c|c|c|c|}
\hline
$Z$&$m$&\multicolumn{2}{|c|}{$var$}&\multicolumn{2}{|c|}{$time$}&\multicolumn{2}{|c|}{$var\times time$}\\
\hline
Dist1&$\diagdown$&\multicolumn{2}{|c|}{$2.07\times10^{-7}$}&\multicolumn{2}{|c|}{$18.44$}&\multicolumn{2}{|c|}{$3.82\times 10^{-6}$}\\
Dist2&$7$&\multicolumn{2}{|c|}{$1.33\times10^{-7}$}&\multicolumn{2}{|c|}{$22.42$}&\multicolumn{2}{|c|}{$2.98\times 10^{-6}$}\\
Dist3&$2$&\multicolumn{2}{|c|}{$1.32\times10^{-7}$}&\multicolumn{2}{|c|}{$22.46$}&\multicolumn{2}{|c|}{$2.98\times 10^{-6}$}\\
\hline  n&0&1&2&3&4&5&6\\
\hline  {$\beta_n$}&{$0.0314$}&{$1.37\times10^{-2}$}&{$4.71\times10^{-3}$}&{$1.30\times10^{-4}$}&{$3.44\times10^{-5}$}&{$8.69\times10^{-5}$}&{$2.20\times10^{-5}$}\\
\hline  
Dist1&1.000&0.3536&0.1250&0.0442&0.0156&0.0055&0.0020\\  Dist2&1.000&0.4671&0.1936&0.0719&0.0262&0.0093&0.0033\\  Dist3&1.000&0.4671&0.1936&0.0685&0.0242&0.0086&0.0030\\
\hline
\end{tabular}
\end{center}
\end{table}

\begin{table}[htbp] 
\begin{center} 
\setlength\tabcolsep{1pt}
\caption{Independent sum estimator; Heston-Hull-White; $k=3,\theta=0.04,\sigma=0.25,\alpha=1,\beta=0.06,\gamma=0.5,S(0)=1,V(0)=0.04,r(0)=0.05$; sample size $10^6$.}
\label{table:6}
\begin{tabular}{|c|c|c|c|c|c|c|c|}
\hline
$Z$&$m$&\multicolumn{2}{|c|}{$var$}&\multicolumn{2}{|c|}{$time$}&\multicolumn{2}{|c|}{$var\times time$}\\
\hline
Dist1&$\diagdown$&\multicolumn{2}{|c|}{$1.56\times10^{-7}$}&\multicolumn{2}{|c|}{$21.50$}&\multicolumn{2}{|c|}{$3.35\times 10^{-6}$}\\
Dist2&$7$&\multicolumn{2}{|c|}{$1.17\times10^{-7}$}&\multicolumn{2}{|c|}{$26.23$}&\multicolumn{2}{|c|}{$3.07\times 10^{-6}$}\\
Dist3&$2$&\multicolumn{2}{|c|}{$1.19\times10^{-7}$}&\multicolumn{2}{|c|}{$27.12$}&\multicolumn{2}{|c|}{$3.23\times 10^{-6}$}\\
\hline  n&0&1&2&3&4&5&6\\
\hline  {$\beta_n$}&{$0.0322$}&{$1.23\times10^{-2}$}&{$3.67\times10^{-3}$}&{$9.85\times10^{-4}$}&{$2.53\times10^{-4}$}&{$6.40\times10^{-5}$}&{$1.61\times10^{-5}$}\\
\hline  
Dist1&1.000&0.3536&0.1250&0.0442&0.0156&0.0055&0.0020\\
Dist2&1.000&0.4368&0.1690&0.0619&0.0222&0.0079&0.0028\\
Dist3&1.000&0.4368&0.1690&0.0597&0.0211&0.0075&0.0026\\
\hline
\end{tabular}
\end{center}
\end{table}

\section{Conclusion}
This paper addresses an optimization problem concerning the unbiased estimators introduced by Rhee and Glynn \cite{RG} for SDE models. Specifically, the computational efficiency of these unbiased estimators relies on the choice of the cumulative distribution function of a random variable, and the optimal distribution should minimise the product of the variance and computational time of an unbiased estimator subject to certain constraints. Based on the results in Rhee and Glynn \cite{RG} and Cui et.al.\cite{CLZZ}, we prove that under a mild assumption on the convergence of $\beta_{n}$, there is a simple representation for the optimal distribution with an infinite horizon. This result establishes a link between the $m$-truncated optimal distribution and the optimal distribution with an infinite horizon, which enables us to construct an adaptive algorithm for the optimal distribution with an adaptive value of $m$. Compared with the $m$-truncated algorithm in Rhee and Glynn \cite{RG} and Cui et.al.\cite{CLZZ}, the merits of our adaptive algorithm are as follows: first, it is capable of handling optimal distributions with an infinite horizon; second, a small value of $m$ typically suffices to produce an accurate estimate of the optimal distribution, which saves a large amount of computational time in the prior estimation of $\beta_{n}$. The efficiency of our adaptive algorithm is illustrated by several numerical examples with some well-known SDEs models in finance.

\section*{Acknowledgement}
This research is supported by National Natural Science Foundation of China (Nos. 11801504, 11801502).

\end{document}